\tikzset{
	ddot/.style={circle,fill=white,draw=black,inner sep=0pt,minimum size=0.8mm},
	>=stealth,
	}
\tikzset{
	ddot2/.style={circle,fill=black,draw=black,inner sep=0pt,minimum size=0.8mm},
	>=stealth,
	}
\newtheorem{theorem}{Theorem} [section]
\newtheorem{lemma}[theorem]{Lemma}
\newtheorem{proposition}[theorem]{Proposition}
\newtheorem{remark}[theorem]{Remark}
\newcommand{\I}{\mathcal{I}}
\newcommand{\noi}{\noindent}
\newcommand{\Z}{\mathbb{Z}}
\newcommand{\R}{\mathbb{R}}
\newcommand{\C}{\mathbb{C}}
\newcommand{\T}{\mathbb{T}}
\let\Re=\undefined\DeclareMathOperator*{\Re}{Re}
\let\Im=\undefined\DeclareMathOperator*{\Im}{Im}
\let\P= \undefined
\newcommand{\P}{\mathbf{P}}
\newcommand{\E}{\mathbb{E}}
\newcommand{\F}{\mathcal{F}}
\newcommand{\al}{\alpha}
\newcommand{\be}{\beta}
\newcommand{\dl}{\delta}
\newcommand{\nb}{\nabla}
\newcommand{\Dl}{\Delta}
\newcommand{\eps}{\varepsilon}
\newcommand{\g}{\gamma}
\newcommand{\G}{\Gamma}
\newcommand{\ld}{\lambda}
\newcommand{\Ld}{\Lambda}
\newcommand{\s}{\sigma}
\newcommand{\ft}{\widehat}
\newcommand{\wt}{\widetilde}
\newcommand{\cj}{\overline}
\newcommand{\dx}{\partial_x}
\newcommand{\dt}{\partial_t}
\renewcommand{\l}{\ell}
\renewcommand{\o}{\omega}
\renewcommand{\O}{\Omega}
\newcommand{\les}{\lesssim}
\newcommand{\jb}[1]
{\langle #1 \rangle}
\newcommand{\ind}{\mathbf 1}
\newcommand{\N}{\mathbb{N}}
\renewcommand{\H}{\mathcal{H}}
\newtheorem*{ackno}{Acknowledgements}
\numberwithin{equation}{section}
\numberwithin{theorem}{section}
\begin{document}
\baselineskip = 15pt

\title[Renormalization of 2-$d$ stochastic NLW]
{Renormalization of the two-dimensional stochastic nonlinear wave equations}

\author[M.~Gubinelli, H.~Koch, and T.~Oh]
{Massimiliano Gubinelli, Herbert Koch, and Tadahiro Oh}

\address{
Massimiliano Gubinelli\\
Hausdorff Center for Mathematics \&  Institut f\"ur Angewandte Mathematik\\%Institute for Applied Mathematics\\ 
 Universit\"at Bonn\\
Endenicher Allee 60\\
D-53115 Bonn\\
Germany}
\email{gubinelli@iam.uni-bonn.de}

\address{Herbert Koch\\
Mathematisches Institut\\
 Universit\"at Bonn\\
Endenicher Allee 60\\
D-53115 Bonn\\
Germany
}

\email{koch@math.uni-bonn.de}

\address{
Tadahiro Oh, School of Mathematics\\
The University of Edinburgh\\
and The Maxwell Institute for the Mathematical Sciences\\
James Clerk Maxwell Building\\
The King's Buildings\\
Peter Guthrie Tait Road\\
Edinburgh\\ 
EH9 3FD\\
 United Kingdom}

\email{hiro.oh@ed.ac.uk}

\subjclass[2010]{35L71, 60H15}

\keywords{stochastic nonlinear wave equation; nonlinear wave equation; 
renormalization; Wick ordering;
Hermite polynomial; white noise}

\begin{abstract}
We study the two-dimensional stochastic nonlinear wave equations (SNLW) with 
an additive space-time white noise forcing.
In particular, we introduce a time-dependent renormalization
and prove that SNLW is pathwise locally well-posed.
As an application of the local well-posedness argument, 
we also establish  a weak universality result 
for the renormalized SNLW.

\end{abstract}

\maketitle

\baselineskip = 14pt

\section{Introduction}

\subsection{Stochastic nonlinear wave equations}

We consider 
the following stochastic nonlinear wave equations (SNLW)  on $\T^2 = (\R/\Z)^2$
with an additive space-time white noise forcing:
\begin{align}
\begin{cases}
\dt^2 u -  \Dl  u   \pm  u^k = \xi\\
(u, \dt u) |_{t = 0} = (\phi_0, \phi_1) \in \H^s(\T^2) : = H^s(\T^2)\times H^{s-1}(\T^2), 
\end{cases}
\quad (x, t) \in \T^2\times \R_+,
\label{SNLW1}
\end{align}

\noi
where $k\geq 2$ is an integer and 
$\xi(x, t)$ denotes a (Gaussian) space-time white noise on $\T^2\times \R_+$.
In view of the time reversibility of the deterministic nonlinear wave equations, 
one can also consider \eqref{SNLW1} on $\T^2 \times \R$
by extending 
the white noise $\xi$ onto $\T^2 \times \R$.\footnote{Namely, 
replace
$\be_n$ in \eqref{Wiener1} by the sum of  two independent Brownian motions, one forward in time on $\T^2\times [0, \infty)$ and 
the other backward in time $\T^2 \times (-\infty, 0]$,
both starting at $t = 0$.}
For simplicity, however, we only consider positive times in the following.
Moreover, we restrict our discussion to the real-valued setting.

The stochastic wave equations with space-time white noise and with general nonlinearity 
have already been considered by Albeverio, Haba, Oberguggenberger, and Russo in a series of works~\cite{russo1,russo2,russo3,russo4} 
for spatial dimensions going from one to three. 
In particular, they showed that, in two and three dimensions, solutions have to be distributions.
Moreover, they highlighted a phenomenon of \emph{triviality};
let $u_\eps$ be a smooth solution of SNLW obtained by replacing the space-time white noise $\xi$
in \eqref{SNLW1}
by a suitable regularized noise  $\xi_\eps$.
Then, it was shown that as the regularization is removed,
$u_\eps$ converges  to a limiting process $u$
satisfying  a \emph{linear} wave equation. 
The nonlinear behavior does not appear any more in the limiting equation due to the extreme oscillations of prelimit solutions $u_\eps$. This phenomenon has been already noticed in parabolic equations, for example in the stochastic quantization problem of Euclidean scalar fields in two and three 
dimensions,
where 
 a renormalization is needed in order to obtain a non-trivial limiting behavior. 
 In this paper,  we will present the first analysis of the renormalization problem for the stochastic nonlinear wave equation \eqref{SNLW1} in two dimensions. 
 In order to  implement this renormalization at the algebraic level,
  we restrict   the form of the nonlinearity to a polynomial one. For the sake of simplicity, we consider a single monomial,  although 
 more general polynomial interactions could be considered. Other possible models for nonlinearity which should be amenable to renormalization are those given by trigonometric or exponential functions.
 In this case, however,  we expect the renormalization problem to be more subtle and 
 thus we leave it aside for the moment.

By letting $v = \dt u$, we can write \eqref{SNLW1}
in the following Ito formulation:
\begin{align}
\begin{cases}
d  \begin{pmatrix}
u \\ v
\end{pmatrix}
+ \Bigg\{
\begin{pmatrix}
0 & -1\\
-\Dl & 0
\end{pmatrix}
 \begin{pmatrix}
u \\ v 
\end{pmatrix}
+  
\begin{pmatrix}
0 \\ \pm u^k
\end{pmatrix}
\Bigg\} dt 
 = d \begin{pmatrix}
0  \\W
\end{pmatrix}\\
\rule{0mm}{6mm}(u, v) |_{t = 0} = (\phi_0, \phi_1).
\end{cases}
\label{SNLW2}
\end{align}

\noi
Here, $W$ denotes
a cylindrical Wiener process on $L^2(\T^2)$.
More precisely, by letting 
\[e_n(x) = e^{2\pi i n \cdot x}, \quad 
\I = (\Z_+\times \{0\}) \cup (\Z\times \Z_+),
\quad \text{and}\quad \mathcal{J} = \I \cup\{(0, 0)\},\]

\noi
we have\footnote{Note that $\{1, \sqrt 2\cos(2\pi n\cdot x), \sqrt 2 \sin (2\pi n \cdot x):
n \in \I \}$ forms an orthonormal basis of $L^2(\T^2)$ in the real-valued setting.}
\begin{align}
 W(t) & = \be_0(t) e_0 + \frac{1}{\sqrt 2} \sum_{n\in \Z^{2}\setminus\{0\}}
\be_n (t) e_n\notag \\
& = \be_0(t) e_0 +  \sum_{n\in \I}
\Big[\Re(\be_n (t)) \cdot \sqrt 2 \cos (2\pi n \cdot x)
- \Im(\be_n (t) )\cdot \sqrt 2 \sin (2\pi n \cdot x)\Big],
\label{Wiener1}
\end{align}

\noi
where $\{ \be_n\}_{n \in \mathcal{J}}$ is a family of  mutually independent complex-valued Brownian
motions\footnote{Here, we take $\be_0$ to be real-valued.}
on a fixed probability space $(\O, \F, P)$
and $\be_{-n} := \cj{\be_n}$ for $n \in \mathcal{J}$.
Note that $\text{Var}(\be_n(t)) = 2t$ for $n \in \Z^2 \setminus\{0\}$, 
while 
 $\text{Var}(\be_0(t)) = t$.
It is easy to see that $W$ almost surely lies in 
$C^{\al}(\R_+;H^{-1-\eps}(\T^2))$
for any $\al < \frac 12$ and $\eps > 0$.

Let $S(t)$ be the propagator for the linear wave equation
defined by 
\begin{equation*}%\label{lin1}
S(t)(\phi_0, \phi_1):=\cos(t|\nabla|)\phi_0+\frac{\sin (t|\nabla|)}{|\nabla|}\phi_1
\end{equation*}

\noi
as a Fourier multiplier operator.
Then, the mild formulation 
of the Cauchy problem \eqref{SNLW1} (and \eqref{SNLW2})
is given by 
\begin{equation*}
u(t)=S(t)(\phi_0, \phi_1)
\mp \int_{0}^t \frac{\sin ((t-\tau)|\nabla|)}{|\nabla|}u^k(\tau)d\tau
+ \int_{0}^t \frac{\sin ((t-\tau)|\nabla|)}{|\nabla|}dW(\tau).
\end{equation*}
	
\noi
In fact, as it is written, this problem is ill-posed since solutions are expected to be merely distributions in the space variable, raising the problem of controlling the nonlinear term. The problem is already apparent at the level of  the stochastic convolution: 
\begin{equation}
\Psi(t) \stackrel{\text{def}}{=} 
(\dt^2 - \Dl)^{-1} \xi
=  \int_{0}^t \frac{\sin ((t-\tau)|\nabla|)}{|\nabla|}dW(\tau).
\label{sconv1}
\end{equation}

\noi
It can be shown that for each $t> 0$, $\Psi (t)\notin L^2(\T^2)$ almost surely.
In particular, there is an issue in making sense of powers
$\Psi^k$ and a fortiori of the full nonlinearity $u^k$. 
As we discussed above, 
we need to modify  the equation in order to take into account a proper renormalization and a different nonlinearity has to be considered.

\subsection{Renormalized SNLW}
In order to explain the renormalization process,
 we first regularize the equation \eqref{SNLW1} by a Fourier truncation of the noise term and of initial data:\footnote{Strictly speaking, the regularization of initial data is not necessary here but it allows
 us to consider smooth solutions to the regularized equation \eqref{SNLW4}.}
\begin{align}
\begin{cases}
\dt^2 u_N -  \Dl  u_N   \pm  ( u_N)^k   = \P_N \xi \\ 
(u_N, \dt u_N) |_{t = 0} = (\P_N\phi_0, \P_N\phi_1),
\end{cases}
\label{SNLW4}
\end{align}

\noi
where  $\P_N$ is the Dirichlet projection onto the spatial frequencies $\Z^2_N \stackrel{\text{def}}{=} \{|n|\leq N\}$.
In the following, we discuss the renormalization for   \eqref{SNLW4}.

We define 
 the truncated stochastic convolution $\Psi_N(t)$ by 
\begin{align} 
\Psi_N(t) \stackrel{\text{def}}{=}  \P_N \Psi(x, t)
 = \sum_{n \in \mathbb{Z}^2_N} e_n 
   \int_0^t \frac{\sin ((t - \tau) | n |)}{| n |} d \wt \beta_n (\tau) 
\label{G0}
\end{align}

\noi
with the understanding that
\begin{align}
\frac{\sin ((t - \tau) | 0 |)}{| 0 |} \stackrel{\text{def}}{=} t-\tau
\qquad \text{and}\qquad
 \wt \be_n = \begin{cases}
\frac{1}{\sqrt{2}}\be_n, & \text{if }n \ne 0, \\
\be_0, & \text{if } n = 0.
\end{cases}
\label{G0a}
\end{align}

\noi
Then, 
for each fixed $x \in \T^2$ and $t \geq 0$,  
 it follows from  Ito isometry
that the random variable $\Psi_N(x, t)$
 is a mean-zero real-valued Gaussian random variable with variance
\begin{align}
\s_N(t) &  \stackrel{\text{def}}{=} \E \big[\Psi_N^2(x, t)\big]
 = \int_0^t (t-\tau)^2\, d\tau
+ 2 \sum_{n \in \I \cap \Z^2_N}%{\substack{n\in \I\\|n|\leq N}} 
\int_0^t \bigg[\frac{\sin((t - \tau)|n|)}{|n|} \bigg]^2 d\tau
\notag \\
& = \frac{t^3}{3} +  \sum_{0< |n|\leq N} \bigg\{ \frac{t}{2 |n|^2} - \frac{\sin (2 t |n|)}{4|n|^3 }\bigg\}\sim  t \log N.
\label{sig1}
\end{align}

\noi
Note that $\s_N(t)$ is independent of $x \in \T^2$.
The structure of the equation makes it clear that any solution can be decomposed as 
\begin{equation}
\label{eq:ansatz}
u_N = \Psi_N + v_N, 
\end{equation}

\noi
where the residual term $v_N$ solves a nonlinear wave equation (NLW) with 
the following  polynomial nonlinearity 
with random coefficients depending on $\Psi_N$:
%given by
\begin{equation}
\label{eq:decomposition}
u_N^k = \sum_{\ell=0}^k {k\choose \ell} \Psi_N^\ell v_N^{k-\ell}.
\end{equation}

\noi
Note, however, that
  the monomials $\Psi_N^\ell$ does not have nice limiting behavior as $N\to \infty$. 
  Despite this difficulty,  the decomposition~\eqref{eq:ansatz} is motivated by the heuristics that, in two dimensions, the only singularities which have to be dealt with in the renormalization process are related to the powers of the random field $\Psi$. We are going to prove that this is indeed the case and that the residual term $v_N$ can be controlled in a nice space. The decomposition~\eqref{eq:ansatz} usually takes the name of Da Prato-Debussche trick \cite{DPD}
in the field of stochastic parabolic  PDEs.
Note that such an idea also appears in 
McKean \cite{McKean} and 
Bourgain \cite{BO96} in the context of (deterministic) dispersive PDEs with random initial data, predating \cite{DPD}. See also Burq-Tzvetkov \cite{BT1}.
 
In order to renormalize the nonlinearity $u_N^k$ in~\eqref{eq:decomposition},
 we need to introduce suitable counter-terms. We will show that in order to renormalize 
 each random monomial $\Psi^\ell_N$,
  it is enough to replace it with its \emph{Wick ordered} counterpart: 
\begin{align}
:\!\Psi^\l_N(x, t) \!: \, \stackrel{\text{def}}{=} H_\ell(\Psi_N(x, t);\sigma_N(t)).
\label{Herm1}
\end{align}

\noi
Here,  
$H_\l(x; \s )$ is given by 
\begin{align*}
H_\l(x; \s )  = \s^\frac{\l}{2} H_\l(\s^{-\frac{1}{2}} x),
%\label{Herm2}
\end{align*}

\noi
 where $H_\l(\cdot)$ is the $\l$th Hermite polynomial for the standard Gaussian measure.
Combining this with the following standard identity
\begin{align*}
 H_k(x+y) 
& = \sum_{\l = 0}^k
\begin{pmatrix}
k \\ \l
\end{pmatrix}
x^{k - \l} H_\l(y), 
\end{align*}

\noi
we have 
\begin{align}
H_k(x+y; \s )
&  = \s^\frac{k}{2}\sum_{\l = 0}^k
\begin{pmatrix}
k \\ \l
\end{pmatrix}
\s^{-\frac{k-\l}{2}} x^{k - \l} H_\l(\s^{-\frac{1}{2}} y) \notag \\
&  = 
\sum_{\l = 0}^k
\begin{pmatrix}
k \\ \l
\end{pmatrix}
 x^{k - \l} H_\l(y; \s).
\label{Herm3}
\end{align}

\noi
In our situation, this gives 
\begin{equation*}
%\label{eq:renormalized-nonlinearity}
H_k(u_N(x, t);\s_N(t)) = \sum_{\l=0}^k {k\choose \ell} H_\ell(\Psi_N(x, t);\sigma_N(t)) 
\big(v_N(x, t)\big)^{k-\ell}.
\end{equation*}

\noi
From this, we see that 
 Wick ordering all the monomials $\Psi_N^\ell$ in 
\eqref{eq:decomposition}
 is equivalent to replacing the original nonlinearity $u_N^k$ by the $k$th Hermite polynomial 
 $H_k(u_N(x, t);\sigma_N(t))$. 
Note that there is no reason for $u_N$ to be a Gaussian random variable.
By common abuse of language, however, we refer to the function $H_k(u_N(x, t);\sigma_N(t))$ 
as a Wick ordered non-linearity\footnote{We expect the variance of the solution $u_N(t)$
grows in time.  See Oh-Quastel-Sosoe \cite{OQS2} in the context of the stochastic KdV equation.
Hence, the renormalization must depend on time.
This is different from the situation where one expects an invariant measure
for a given dynamics so that a renormalization is time-independent.}
of $u_N^k$.
Compare this with the usual Wick ordered (deterministic) NLW
on $\T^2$ considered in Oh-Thomann \cite{OT2}.

As in the case of  the usual (time-independent) Wick ordered monomial, 
this time-dependent renormalization allows us
to define 
\begin{align}
:\! \Psi^k  \!: \,      \stackrel{\text{def}}= \lim_{N\to \infty}    :\! \Psi_N^k \!: 
\label{Wick2}
\end{align}

\noi
in $L^p(\O; C([0,T];W^{-\eps,\infty}(\T^2)))$
for any $p < \infty$ and $\eps >  0$
(and for any $k \in \N$).\footnote{Here, $W^{s, r}(\T^2)$
denotes the usual $L^r$-based Sobolev space (Bessel potential space) defined by the norm:
\[\| u\|_{W^{s, r}} = \| \jb{\nb}^s u \|_{L^r} = \big\| \F^{-1}( \jb{n}^s \ft u(n))\big\|_{L^r}.\]

\noi
When $r = 2$, we have $H^s(\T^2) = W^{s, 2}(\T^2)$.} 
See Proposition \ref{PROP:sconv} below. 
This convergence result allows us to describe the limiting problem we are going to solve. 
Consider a function $u=\Psi+v$,  where $v\in L^q([0,T];W^{s,r}(\T^2))$ for some appropriate $q,r \ge 1$ and $s >0$. 
Then, as $N \to \infty$, we have
\begin{align}
\label{eq:wick}
H_k(\P_N & u(x, t);  \sigma_N(t))  \notag\\
& \longrightarrow  \ :\!u^k(x, t)\!:\, = F_\Psi(v) (x, t)
\stackrel{\text{def}}{=} \sum_{\ell=0}^k {k\choose \ell} :\! \Psi^\l (x, t)  \!: \big(v(x, t)\big)^{k-\ell}.
\end{align}

\noi
We insist that the  nonlinear (random) function $u \mapsto \, :\!u^k\!:\,  =F_\Psi(v)$ 
is only defined for $u$ of the form $\Psi+v$ with suitable $v$. 
With this in mind,  we set our main goal to prove local well-posedness of 
the following Wick ordered SNLW:
\begin{align}
\begin{cases}
\dt^2 u -  \Dl  u   \, \pm : \!u^{k} \!:\, =  \xi \\
(u, \dt u) |_{t = 0} = (\phi_0, \phi_1). 
\end{cases}
\label{SNLW7}
\end{align}

\noi
In the following, we concentrate on 
the following mild formulation of  the  Wick ordered SNLW \eqref{SNLW7}:
\begin{align}
u(t)= & S(t)(\phi_0,    \phi_1)
 \mp \int_{0}^t  \frac{\sin ((t-\tau)|\nabla|)}{|\nabla|} \!:\! u^k(\tau)\!: \!d\tau 
 + \int_{0}^t \frac{\sin ((t-\tau)|\nabla|)}{|\nabla|}dW(\tau),
\label{SNLW8}
\end{align}

\noi
where the Wick ordered nonlinearity 
$\!:\! u^k\!:$ is defined by \eqref{eq:wick}.

We point out that such a solution $u$ to \eqref{SNLW7}
and \eqref{SNLW8}
can also be given as the limit of solutions to 
the following truncated Wick ordered SNLW:
\begin{align}
\begin{cases}
\dt^2 u_N -  \Dl  u_N   \pm H_k(u_N;\sigma_N)  = \P_N \xi \\ 
(u_N, \dt u_N) |_{t = 0} = (\P_N\phi_0, \P_N\phi_1) , 
\end{cases}
\label{SNLW5}
\end{align}
as $N \in \N$. 
More precisely, 
one can study 
the following mild formulation of  the truncated Wick ordered SNLW \eqref{SNLW5}:
\begin{align}
u_N(t)= & S(t)(\P_N\phi_0,   \P_N \phi_1)
 \mp \int_{0}^t  \frac{\sin ((t-\tau)|\nabla|)}{|\nabla|} H_k(u_N;\sigma_N)(\tau) d\tau+ \Psi_N(t) 
\label{SNLW6}
\end{align}

\noi
and prove  (i) \eqref{SNLW6} is locally well-posed
``uniformly in $N \in \N$''
and  (ii) 
 $u_N$ converges to 
a stochastic process $u$ such that 
the Wick ordered nonlinearity $:\! u^k (x, t) \!: \,$ in \eqref{SNLW7}
is well defined  and 
the following limit holds:
\begin{align*}
:\! u^k  (x, t)\!: \,    \stackrel{\text{def}}{=} \lim_{N\to \infty}    H_k(u_N(x, t);\sigma_N(t)).
\end{align*}

\noi
One can then define
 this limit $u$ to be a solution to \eqref{SNLW7}.
This solution $u$ constructed as a limit of $u_N$ as above
agrees with the solution to the mild formulation \eqref{SNLW8}
in a suitable sense.
See Remark \ref{REM:thm1} below.

\subsection{Main result}

Before we state our main result, we first need to discuss
critical regularities associated to  the deterministic NLW:
\begin{align*}
\dt^2 u -  \Dl  u   \pm  u^k = 0.
\end{align*}

\noi
On the one hand, NLW on $\R^d$ enjoys the scaling symmetry, 
which induces the so-called scaling critical Sobolev index:
$s_\text{scaling} = \frac{d}{2} - \frac{2}{k-1}$.
On the other hand, NLW also enjoys
the Lorentzian invariance (conformal symmetry), 
which yields its own critical regularity 
$s_\text{conf} = \frac{d+1}{4} - \frac{1}{k-1}$
(at least in the focusing case);
see \cite{LS}.
\noi
In particular, when $d = 2$, we define $s_\text{crit}$ for a given integer $k \ge 2$ by 
\begin{align}\label{scrit}
s_\text{crit} := \max(s_\text{scaling}, s_\text{conf}, 0) = \max\bigg(1 - \frac{2}{k-1}, 
\frac 34 - \frac1{k-1}, 0\bigg).
\end{align}

\noi
Note that the third regularity restriction
$0$
appears in making sense of powers of $u$.
See also \eqref{scrit2} and Figure \ref{FIG:1} below.

We now state our main result.

\begin{theorem}\label{THM:1}
Given  an integer  $k \geq 2$, let $s_\textup{crit}$ be as in \eqref{scrit}.
Then, the Wick ordered SNLW \eqref{SNLW7}
is pathwise locally well-posed in $\H^s(\T^2)$
for 
\[ \textup{(i) } k \geq 4: 
\ s \geq  s_\textup{crit}
\qquad \text{or} \qquad 
 \textup{(ii) }  k = 2, 3: 
\ s >  s_\textup{crit}.
\]

\noi
More precisely, 
given any $(\phi_0, \phi_1) \in \H^s(\T^2)$, 
there exists a stopping time $T =  T_\o( \phi_0, \phi_1)$
(which is positive almost surely)
such that there exists  a unique solution $u $ to the mild formulation \eqref{SNLW8} on $[0, T]$
with 
\begin{align*}
u \in    \Psi + C([0, T]; H^\s(\T^2)) 
\subset C([0, T]; H^{-\eps}(\T^2)) 
\end{align*}
	
\noi
for any $\eps > 0$, where $\s = \min(s, 1-\eps)$.

\end{theorem}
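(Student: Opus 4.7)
The plan is to implement the Da Prato--Debussche decomposition already motivated in the introduction. Writing $u = \Psi + v$, the residual $v$ must solve
\begin{align*}
v(t) = S(t)(\phi_0, \phi_1) \mp \int_{0}^{t} \frac{\sin((t-\tau)|\nabla|)}{|\nabla|} F_\Psi(v)(\tau)\, d\tau, \quad F_\Psi(v) = \sum_{\ell=0}^{k} \binom{k}{\ell} \, :\!\Psi^\ell\!:\, v^{k-\ell}.
\end{align*}
Once the random Wick powers $:\!\Psi^\ell\!:$ are fixed, this equation is pathwise and will be solved by Banach fixed point. By the convergence \eqref{Wick2} (i.e.\ Proposition \ref{PROP:sconv}) we may restrict attention to the event of full measure on which
\begin{align*}
M_T(\omega) := \max_{0 \leq \ell \leq k} \big\| :\!\Psi^\ell\!: \big\|_{C([0,T]; W^{-\eps,\infty}(\T^2))} < \infty \qquad \text{for every } T > 0,
\end{align*}
with $\Psi$ itself belonging to $C([0,T]; H^{-\eps}(\T^2))$ almost surely and the convention $:\!\Psi^{0}\!:\, = 1$ covering the $\ell = 0$ term. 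The stopping time $T_\omega$ in the statement will then be the largest time at which the contraction bound derived below holds.

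The fixed point takes place in a Strichartz-type space $X_T^\sigma \hookrightarrow C([0,T]; H^\sigma(\T^2))$ with $\sigma = \min(s, 1 - \eps)$, incorporating an $L^q_T L^r_x$ norm for an admissible pair $(q, r)$ for the 2-$d$ wave equation chosen so that the deterministic nonlinearity $v^k$ can be controlled at regularity $s_{\text{scaling}} = 1 - \frac{2}{k-1}$. The linear Strichartz bound on $\T^2$ yields $\|S(\cdot)(\phi_0, \phi_1)\|_{X_T^\sigma} \lesssim \|(\phi_0, \phi_1)\|_{\H^s}$, and the inhomogeneous Strichartz bound reduces the analysis of $\Phi(v)$, the right-hand side of the Duhamel equation above, to dual-space estimates on each product $:\!\Psi^\ell\!:\, v^{k-\ell}$. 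Since $:\!\Psi^\ell\!:\; \in W^{-\eps,\infty}$, a fractional Leibniz / paraproduct decomposition together with Sobolev embedding produces a schematic estimate
\begin{align*}
\big\| :\!\Psi^\ell\!:\, v^{k-\ell} \big\|_{W^{-\eps,p}_x} \lesssim M_T(\omega)\, \|v\|_{W^{\eps',q}_x}^{k-\ell}
\end{align*}
for some $\eps' > \eps$ and appropriate $p, q \geq 1$, provided $\sigma$ exceeds both $s_{\text{scaling}}$ and the conformal index $s_{\text{conf}} = \frac{3}{4} - \frac{1}{k-1}$ at which the 2-$d$ wave Strichartz bound becomes sharp; combined with the obvious necessity $\sigma \geq 0$ in order to take powers, these are precisely the exponents assembled into $s_{\text{crit}}$ in \eqref{scrit}. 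Summing over $\ell$ and applying the inhomogeneous Strichartz estimate produces
\begin{align*}
\|\Phi(v)\|_{X_T^\sigma} \leq C\, \|(\phi_0, \phi_1)\|_{\H^s} + C\, T^{\theta} \big(1 + M_T(\omega)\big)^{k} \big(1 + \|v\|_{X_T^\sigma}\big)^{k},
\end{align*}
with $\theta > 0$ whenever $s > s_{\text{crit}}$ (covering $k = 2, 3$ and the subcritical part of $k \geq 4$), and the parallel difference estimate closes the contraction on a ball of size $\sim \|(\phi_0,\phi_1)\|_{\H^s}$ for $T = T_\omega$ sufficiently small.

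The main obstacle is the critical case $s = s_{\text{crit}}$, permitted only for $k \geq 4$, where no power of $T$ is available from H\"older in time, so that the contraction must instead be driven by smallness of the Strichartz norm of the linear evolution. One has to design $X_T^\sigma$ so that $\|S(\cdot)(\phi_0, \phi_1)\|_{X_T^\sigma} \to 0$ as $T \to 0$ by dominated convergence, while requiring the nonlinear estimate to be $T$-homogeneous. This simultaneously forces the admissible pair $(q, r)$ to saturate both the scaling and the conformal conditions of the 2-$d$ wave Strichartz inequality, which is the most delicate part of the argument and explains why both exponents enter \eqref{scrit}. Once the contraction is established, $u = \Psi + v$ lies in $\Psi + C([0, T]; H^\sigma(\T^2)) \subset C([0, T]; H^{-\eps}(\T^2))$, uniqueness inside the contraction ball is immediate and can be bootstrapped to the full class $\Psi + C([0,T]; H^\sigma(\T^2))$ by a standard argument, and positivity a.s.\ of $T_\omega$ follows from $M_T(\omega) < \infty$ for all $T$ together with the smallness mechanisms (power of $T$, or smallness of the linear evolution) just described.
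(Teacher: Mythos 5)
Your proposal follows essentially the same path as the paper's proof in Section~\ref{SEC:3}: the Da Prato--Debussche decomposition $u = \Psi + v$, a contraction for $v$ in the Strichartz-type space $X^\sigma(T)$, Proposition~\ref{PROP:sconv} to control $:\!\Psi^\ell\!:$ pathwise in $C_T W^{-\eps,\infty}_x$, the product estimates of Lemma~\ref{LEM:bilin} (together with interpolation between the energy and Strichartz parts of the norm) to trade the $-\eps$ derivative on $:\!\Psi^\ell\!:$ against positive regularity on $v$, and, for the critical case $s = s_\textup{crit}$ with $k \geq 4$, smallness of a ``homogeneous'' Strichartz norm of the linear flow as $T \to 0$ (the paper's auxiliary $Y^s(T)$). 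The only piece you leave genuinely schematic is the explicit optimization of $\min(q/\wt q,\, r/\wt r)$ over $s$-admissible and dual $s$-admissible pairs (Lemma~\ref{LEM:max}), which is precisely where the three exponents entering $s_\textup{crit}$, the attainability of the endpoint $s=s_\textup{crit}$ for $k \geq 4$, and its exclusion for $k = 2,3$ (where the maximizer degenerates to $\wt r = 1$) actually come from.
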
 

In Theorem \ref{THM:1}, 
the uniqueness holds only in 
 $\Psi + X^\s(T)$, 
  where $X^\s(T)$ is given by 
\[ X^\s(T) = C([0,T];H^\s(\T^2))\cap C^1([0,T]; H^\s(\T^2))\cap L^q([0, T]; L^r(\T^2))
\]

\noi
for some suitable $\s$-admissible pair $(q, r)$.
See Section \ref{SEC:3} for more on this point.

In \cite{Oh}, the third author proved pathwise local well-posedness of  
the following stochastic KdV
 with an additive space-time white noise
forcing:
\[ d u + (\dx^3 u  + u \dx u)dt =dW, \qquad (x, t) \in \T \times \R_+, \]

\noi
where $W$ denotes
a cylindrical Wiener process on $L^2(\T)$.
Theorem \ref{THM:1} is the second example on pathwise local well-posedness 
of rough stochastic dispersive PDEs
 with an additive space-time white noise forcing.

As we already mentioned in the previous subsection, 
the Wick ordered SNLW is \emph{defined} only for functions 
\begin{align}
u = \Psi+v
\label{decomp}
\end{align}

\noi
 with $v$ of suitable positive regularity. 
The main strategy for proving  Theorem \ref{THM:1} 
is then to consider the following fixed point problem for $v = u  - \Psi$:
\begin{align}
v(t)= 
S(t) (\phi_0, \phi_1)
 \mp \int_{0}^t  \frac{\sin ((t-\tau)|\nabla|)}{|\nabla|}  F_\Psi(v(\tau)) d\tau,
\label{SNLW9}
\end{align}

\noi
where $F_\Psi$ is as in \eqref{eq:wick}.

The proof is based on a fixed point argument via the Strichartz estimates
for the wave equations and  the general structure of the proof is similar to that 
for stochastic parabolic equations. The key point is to use function spaces
where the wave equation allows for a gain in regularity. 
This gain is sufficient to prove that $v$ has better regularity than $\Psi$ and gives a
well defined nonlinearity for which suitable local-in-time estimates can be established.  
In Section~\ref{SEC:2},  we prove the necessary
stochastic estimates for the random terms and then we give the
deterministic nonlinear estimates and the proof of
Theorem~\ref{THM:1} in Section~\ref{SEC:3}.

As an application of the local well-posedness argument, 
we  show a weak universality result for the Wick ordered SNLW
in Section \ref{SEC:4}.
Given small $\eps > 0$, we consider the following  SNLW equation 
on a dilated torus $(\eps^{- 1} \T)^2$ 
with a smooth noise $\eta^\eps$:
\begin{align*}
\begin{cases}
 \dt^2 w_\eps - \Dl w_\eps = f (w_\eps) + a (\eps, t) w_\eps + \delta   (\eps) \eta^\eps \\
 (w_\eps, \dt w_\eps) |_{t = 0} = (0, 0),
\end{cases}
\qquad 
(x, t)\in (\eps^{- 1} \T)^2 \times \R_+, 
%\label{SNLW11a}
\end{align*}

\noi
where $f: \R \to \R$ is a given smooth odd, bounded function
with a sufficiently number of bounded derivatives,
 $\eta^\eps$ is a noise which is white in time but
smooth and stationary in space,
and $a (\eps, t)$ and $\delta(\eps)$ are parameters to be chosen.
Consider the following space-time scaling:
\begin{align*}
u_{\eps} (x, t) = \eps^{- \g} w_\eps (\eps^{-1} x, \eps^{-1} t)
%\label{SNLW11b}
\end{align*}

\noi
for some $\g > 0$.
Namely, $u_\eps$ describes the behavior of $w_\eps$ at large scales, 
both in space and time.
Then, by appropriately choosing 
 parameters  $\g = 1$,   $\delta(\eps) = \eps^{\frac 32}$, and
 $a (\eps, \eps^{-1} t)$, 
 we show that $u_\eps$ converges in a suitable sense to 
  the solution $u$ to the Wick ordered cubic SNLW:
  \begin{equation*}
\begin{cases}
\dt^2 u - \Dl  u = \lambda : \!u^3 \!: \, + \, \xi\\
 (u, \dt u) |_{t = 0} = (0, 0)
\end{cases}
  \end{equation*}

\noi
for some $\ld = \ld(f)$.
Here, we can choose  
 $a(\eps, t)$ such that it depends only on $f$, the noise, and $\eps > 0$.
See Theorem \ref{THM:2} below.
We also refer readers to 
\cite{HQ, GP1, GP2} for more discussion on weak universality 
(for stochastic parabolic equations, in particular the KPZ equation).

\medskip

We conclude this introduction by stating several remarks.

\begin{remark}\label{REM:thm1} \rm
The same local well-posedness result
also applies to the truncated Wick ordered SNLW \eqref{SNLW5},
uniformly in $N \in \N$.
More precisely, given 
 $(\phi_0, \phi_1) \in \H^s(\T^2)$
and $N \in \N$, 
there exist
a stopping time $T = T_\o(\phi_0, \phi_1)$ ($>0$ almost surely)
and 
a unique solution $u_N $ to \eqref{SNLW6} on $[0, T]$
such that 
\begin{align*}
u_N \in 
 \Psi_N + C([0, T]; H^\s(\T^2) ) .
\end{align*}

\noi
Moreover, 
one can prove that 
the solutions $u_N$ to \eqref{SNLW6} converges to the solution $u$ to \eqref{SNLW8}
constructed in Theorem \ref{THM:1} as $N \to \infty$.

In the discussion above, we used the Dirichlet projection $\P_N$
onto the spatial frequencies $\{|n|\leq N\}$
for regularization.
The interpretation of the Wick ordered nonlinearity \eqref{eq:wick}
seems to depend on this regularization procedure at this point.

One may instead use a different regularization procedure. 
Given a compactly supported smooth function $\rho \in L^1(\T^2)$ with $\int \rho dx = 1$,
let $\P^\rho_N$ be the mollification given by 
$\P^\rho_N f = \rho_N * f$,
where $\rho_N(x) = N^2\rho(N x)$.
Then, one can consider the regularized stochastic convolution 
$\Psi_{\rho, N}  = \P^\rho_N \Psi$
associated to this mollification
and define the Wick ordered monomials:
\begin{align*}
:\! \Psi^k (x, t) \!: \,      \stackrel{\text{def}}= \lim_{N\to \infty}   H_k(\Psi_{\rho, N}(x, t);\sigma_{\rho,N}(t)), 
\end{align*}

\noi
where $\sigma_{\rho,N}(t) \stackrel{\text{def}}{=} \E[\Psi_{\rho, N}^2(x, t)]$.
By proceeding as in \eqref{sig1}, we have
\begin{align*}
\s_{\rho, N}(t) 
& = \frac{t^3}{3} +  \sum_{ |n| >0} | \ft \rho_N(n)|^2 \bigg\{ \frac{t}{2 |n|^2} 
- \frac{\sin (2 t |n|)}{4|n|^3 }\bigg\} \notag\\
& = \frac{t^3}{3} +  \sum_{ |n| >0} \Big| \F_{\R^2}( \rho)\Big(\frac{n}{N}\Big)\Big|^2 \bigg\{ \frac{t}{2 |n|^2} 
- \frac{\sin (2 t |n|)}{4|n|^3 }\bigg\}, 
\end{align*}

\noi
where  $\F_{\R^2}( \rho)$ is the Fourier transform of $\rho$
when viewed as a function on $\R^2$.
By slightly modifying the proof of Proposition~\ref{PROP:sconv}, 
one can  prove that the Wick ordered monomials
$:\! \Psi^k  \!: $ do not depend on 
the choice of mollifiers (including the convolution kernel of the Dirichlet projection $\P_N$). 
This directly implies that the renormalized nonlinearity $F_\Psi$ is also 
independent of the choice of a mollifier. Of course, the precise value of the renormalization constant 
will depend on $\rho$.

\end{remark}

\begin{remark}
\rm
	
With a small modification of the proof,  
 Theorem \ref{THM:1} also holds for 
the following stochastic nonlinear Klein-Gordon equation 
with an additive space-time white noise:	
\begin{align}
\dt^2 u  +  (1 -  \Dl)  u   \pm  u^k = \xi.
\label{SNLKG}
\end{align}

On the one hand,  we  restrict our attention  to the real-valued setting
in this paper.
On the other hand, it is often useful to consider complex-valued solutions
to the nonlinear Klein-Gordon equation.
We point out that
Theorem \ref{THM:1} also holds in the complex-valued setting, 
provided that we adjust 
the white noise forcing and 
the renormalization procedure
to the complex-valued setting.
In particular, 
one needs to use (generalized) Laguerre polynomials instead of Hermite polynomials.
See Oh-Thomann \cite{OT1} for details.
\end{remark}

\begin{remark} \label{REM:NLW}
\rm 

In the following, we state local well-posedness 
of the following 
deterministic NLW on $\T^2$:
\begin{align}
 \dt^2  u- \Delta u \pm |u|^{k-1} u =0,
 \label{D1}
\end{align}

\noi
where we allow $k\geq2$ to take non-integer values.
We extend the critical regularity $s_\text{crit}$ in \eqref{scrit} 
to a real number $k \geq 2$ by setting
\begin{align}\label{scrit2}
s_\text{crit} := \max\big(s_\text{scaling}, s_\text{conf}, \tfrac 34 - \tfrac3{2k}\big) 
= \max\big(1 - \tfrac{2}{k-1}, 
\tfrac 34 - \tfrac1{k-1},
\tfrac34-\tfrac3{2k}  \big).
\end{align}

\noi
This extends  $s_\text{crit}$ defined \eqref{scrit}
to non-integer values of $k \geq 2$.
As far as we know, the third regularity
$ \frac 34 - \frac3{2k}$ does not correspond to any symmetry of the equation and thus it is not a critical regularity in the usual sense.
It, however, imposes a regularity restriction when $2 \leq k \leq 3$.

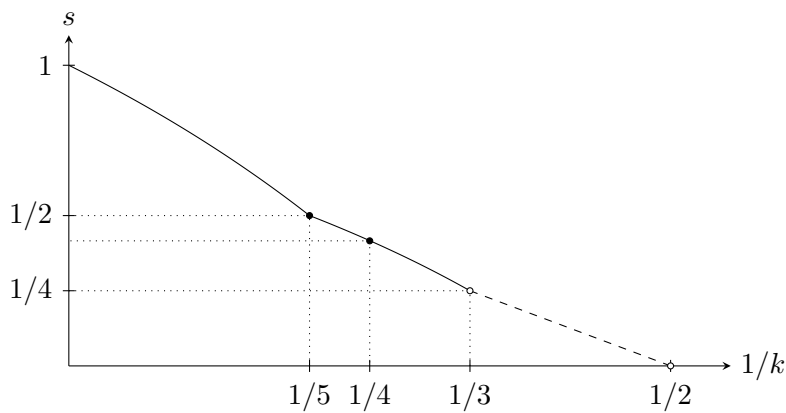
\begin{figure}%[h]

\begin{center} 
\begin{tikzpicture}[scale=.8] 
\draw [->] (0,0) -- (11,0) node[anchor=west] {$1/ k$} ; 
\draw [->] (0,0) -- (0,5.5) node[anchor=south]  {$s$} ; 
\draw (.1,1.25)--(-.1,1.25) node[anchor=east] {$1/4$} ; 
\draw (.1,2.5)--(-.1,2.5) node[anchor=east] {$1/2$} ; 
\draw (.1,5)--(-.1,5) node[anchor=east] {$1$}; 
\draw (10 ,.1)--(10 ,-.1) node[anchor=north]{$1/2$};
\draw (6.666 ,.1)--(6.666 ,-.1) node[anchor=north]{$ 1/3$};
\draw (4 ,.1)--(4 ,-.1) node[anchor=north]{$1/5$};

\draw[domain=0:1/5,smooth,variable=\t] plot ({20*\t},{5*(1-(2*\t)/(1-\t))});
\draw[domain=1/5:1/3,smooth,variable=\t] plot ({20*\t},{5*(.75-\t/(1-\t))});

\draw[dashed]  (10,0)--(6.666,1.25);

\draw[dotted]  (6.666,0)--(6.666,1.25); 
\draw[dotted]  (0,1.25)--(6.666,1.25); 

\draw[dotted]  (4,0)--(4,2.5); 
\draw[dotted]  (0, 2.5)--(4,2.5); 

\draw (5 ,.1)--(5 ,-.1) node[anchor=north]{$1/4$};
\draw [dotted] (5, 2.08)--(5, 0); 
\draw [dotted] (5, 2.08)--(0, 2.08); % node[anchor=west]{$5/12$};

\draw (4, 2.5)node[ddot2]{};

\draw (5, 2.08)node[ddot2]{};

\draw (6.666,1.25)node[ddot] {};% -- (0,0) node[ddot] {}-- (0.9, -1) node[dot] {}; 
\draw (10, 0)node[ddot]{};

\end{tikzpicture} 
\end{center}

\caption{The critical regularity $s_{\text{crit}}$ in \eqref{scrit2} as a function of $\frac 1k$.
The deterministic NLW \eqref{D1} is locally well-posed on and above the solid line
and above the dashed line.}
 \label{FIG:1}

\end{figure}

By the standard Strichartz estimates (see Lemma \ref{LEM:Str})
and a fixed point argument, 
one can easily prove local well-posedness of \eqref{D1}
in $\H^{s} (\T^2)$
for  (i) $s \geq     s_\text{crit}$ if $k >3$ 
and (ii)  $s> s_\text{crit}$ if $ 2 \leq k \leq 3$.
See Subsection \ref{SUBSEC:3.1}.
Figure \ref{FIG:1} shows the range of local well-posedness
of \eqref{D1}
as a function of $\frac{1}{k}$.

\end{remark}

\section{On the stochastic convolution}
\label{SEC:2}

In this section, we establish relevant estimates on 
the stochastic convolution $\Psi$.
In particular, we prove 
the following regularity result  on the Wick ordered monomials 
$:\!\Psi^\l_N(x, t) \!: \, = H_\ell(\Psi_N(x, t),\sigma_N(t))$
defined in \eqref{Herm1}.

\begin{proposition}\label{PROP:sconv}
Let  $\l \in \N$, $T >0$ and $p\ge 1$. 
Then, $\{ :\!\Psi^\l_N \!:  \}_{N \in \mathbb{N}}$ 
is a Cauchy sequence in $L^p(\Omega; C([0,T];W^{-\eps,\infty}(\T^2)))$.
In particular, denoting the limit by  $:\!\Psi^\l\!:$,    
we have  $:\!\Psi^\l\!:\, \in C([0,T];W^{-\eps,\infty}(\T^2))$ almost surely.
\end{proposition}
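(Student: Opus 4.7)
The plan is to reduce everything to $L^2(\Omega)$ second-moment computations via Gaussian hypercontractivity and then to exploit the explicit covariance of the stochastic convolution. Since $\Psi_N(x,t)$ is a centered Gaussian and $H_\ell$ is the $\ell$-th Hermite polynomial, for each $(x,t)$ the random variable $:\!\Psi_N^\ell\!:(x,t)$ belongs to the $\ell$-th homogeneous Wiener chaos driven by $W$; the same is true of $\langle\nabla\rangle^{-\eps/2}(:\!\Psi_N^\ell\!: - :\!\Psi_M^\ell\!:)(x,t)$ and of time differences of these quantities. Nelson's hypercontractivity therefore gives $\|F\|_{L^p(\Omega)} \leq (p-1)^{\ell/2}\|F\|_{L^2(\Omega)}$ for all $p \geq 2$ and all such $F$, so it suffices to bound second moments.

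The central computation is the Wick isometry: for jointly centered Gaussians $X,Y$ with variances $\sigma_X,\sigma_Y$,
\begin{equation*}
\E\bigl[H_\ell(X;\sigma_X)\, H_\ell(Y;\sigma_Y)\bigr] = \ell!\,(\E[XY])^{\ell}.
\end{equation*}
Applied with $X = \Psi_N(x_1,t_1)$ and $Y = \Psi_M(x_2,t_2)$, this reduces all second moments of differences of Wick monomials to differences of $\ell$-th powers of the kernel $\Gamma_{N,M}(x_1,t_1;x_2,t_2) := \E[\Psi_N(x_1,t_1)\Psi_M(x_2,t_2)]$, which, by \eqref{G0}, \eqref{G0a}, and It\^o's isometry, is an explicit finite sum over $|n|\leq \min(N,M)$ of the same type as \eqref{sig1}. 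The telescoping identity $a^\ell - b^\ell = (a-b)\sum_{k=0}^{\ell-1}a^{k}b^{\ell-1-k}$ then reduces Cauchyness of $:\!\Psi_N^\ell\!:$ in $L^2(\Omega)$ to convergence of $\Gamma_{N,M}$, weighted appropriately by negative powers of $\langle n\rangle$.

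To promote these pointwise-in-$(x,t)$ bounds to $L^p(\Omega; C([0,T]; W^{-\eps,\infty}))$ bounds, I use two ingredients. First, the Sobolev embedding $W^{-\eps/2, q}(\T^2) \hookrightarrow W^{-\eps,\infty}(\T^2)$, valid for $q > 4/\eps$, together with Fubini and hypercontractivity, yields for each fixed $t$
\begin{equation*}
\E\, \|:\!\Psi_N^\ell\!:\!(t)-\,:\!\Psi_M^\ell\!:\!(t)\|_{W^{-\eps,\infty}}^p \lesssim \sup_{x\in\T^2}\bigl(\E\,|\langle\nabla\rangle^{-\eps/2}(:\!\Psi_N^\ell\!:-:\!\Psi_M^\ell\!:)(x,t)|^2\bigr)^{p/2};
\end{equation*}
a Fourier computation of the right-hand side using the Wick covariance identity shows that the $\langle n\rangle^{-\eps}$ weight produces a decay rate $\min(N,M)^{-\delta}$ for some $\delta = \delta(\eps,\ell) > 0$. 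Second, for the $C_t$ part I apply Kolmogorov's continuity criterion after establishing
\begin{equation*}
\E\,\|:\!\Psi_N^\ell\!:\!(t_1)-\,:\!\Psi_N^\ell\!:\!(t_2)\|_{W^{-\eps,\infty}}^p \lesssim_{p,\ell,\eps} |t_1-t_2|^{\alpha p}
\end{equation*}
uniformly in $N$, for some $\alpha > 0$ and $p$ large; this follows from the same strategy, using the Lipschitz regularity in $t$ of the sine kernels in \eqref{G0} to estimate the time-differences of $\Psi_N$ and of $\sigma_N$ in the resulting covariance sum.

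\textbf{Main obstacle.} The delicate point is the uniformity in $N$ of the temporal H\"older estimate. From \eqref{sig1}, $\sigma_N(t)$ grows like $t\log N$, and derivatives in $t$ of Hermite polynomials bring down factors of $\sigma_N$ and its time-derivative that carry this logarithm. The $\langle n\rangle^{-\eps}$ spatial smoothing must absorb the logarithm while still leaving a strictly positive H\"older exponent $\alpha$ in $|t_1-t_2|$; verifying this balance is where the only substantive computation lies, and it is exactly the mechanism that forces the negative regularity $-\eps$ in the target space.
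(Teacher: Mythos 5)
Your overall plan matches the paper's proof closely: Wick isometry (Lemma~\ref{LEM:Wick}) to reduce second moments to powers of the covariance kernel, telescoping of the resulting $\ell$-th powers, hypercontractivity (Lemma~\ref{LEM:hyp3}) to upgrade to $L^p(\Omega)$, Sobolev embedding $W^{-\eps/2,p}\hookrightarrow W^{-\eps,\infty}$ for $p > 4/\eps$, and Kolmogorov's continuity criterion for the $C_t$ conclusion. That is exactly the route the paper takes.

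However, the paragraph headed \textbf{Main obstacle} misidentifies where the work is. You worry that differentiating $H_\ell(\Psi_N(x,t);\sigma_N(t))$ in $t$ produces factors of $\sigma_N'$ and $\sigma_N \sim t\log N$ that the $\jb{\nb}^{-\eps}$ smoothing must absorb. But the argument you yourself set up earlier never differentiates the Hermite polynomial: after applying Lemma~\ref{LEM:Wick} to each of the four products in $\E[\delta_h :\!\Psi_N^\ell(x,t)\!:\,\delta_h:\!\Psi_N^\ell(y,t)\!:]$, every term is $\ell!\,\{\E[\Psi_N(\cdot)\Psi_N(\cdot)]\}^\ell$, and $\sigma_N$ has disappeared entirely. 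The Fourier coefficients $\E[\ft\Psi_N(n,t_1)\ft\Psi_N(n,t_2)]$ obey $\lesssim_t \jb{n}^{-2}$ uniformly in $N$ with no logarithm. So there is no $\log N$ to absorb.

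The genuine trade-off lies elsewhere, and is glossed over by the phrase ``using the Lipschitz regularity in $t$ of the sine kernels.'' A pure Lipschitz (mean value) estimate of the time increment of the covariance gives only $|\E[\delta_h\ft\Psi_N(n,t)\ft\Psi_N(n,t_1)]| \lesssim_t |h|/\jb{n}$, and plugging the exponent $1$ into the $\ell$-fold sum with the $\jb{n_1+\cdots+n_\ell}^{-2\eps}$ weight forces $2\eps > 1$, so it would not give arbitrarily small $\eps$. The paper interpolates this Lipschitz bound against the trivial bound $\jb{n}^{-2}$ to obtain $|h|^\rho/\jb{n}^{2-\rho}$ for every $\rho\in[0,1]$, and the resulting convergence condition is $\rho < 2\eps$, i.e., the spatial smoothing buys exactly as much temporal H\"older regularity as you spend. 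This interpolation step, not any variance logarithm, is the substantive computation you need to spell out to make the Kolmogorov argument close for all $\eps>0$.
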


Before proceeding to the proof of Proposition \ref{PROP:sconv}, 
we recall some basic tools from probability theory and Euclidean quantum field theory.
See \cite{Kuo, Nu, Simon}.
First, 
recall the Hermite polynomials $H_k(x; \s)$
defined through the generating function:
\begin{equation}
F(t, x; \s) \stackrel{\text{def}}{=}  e^{tx - \frac{1}{2}\s t^2} = \sum_{k = 0}^\infty \frac{t^k}{k!} H_k(x;\s).
\label{H1}
 \end{equation}
	
\noi
For simplicity, we set $F(t, x) : = F(t, x; 1)$
and $H_k(x) : = H_k(x; 1)$ in the following.
For readers' convenience, we write out the first few Hermite polynomials:
\begin{align}
\begin{split}
& H_0(x; \s) = 1, 
\qquad 
H_1(x; \s) = x, 
\qquad
H_2(x; \s) = x^2 - \s,   \\
& H_3(x; \s) = x^3 - 3\s x, 
\qquad 
H_4(x; \s) = x^4 - 6\s x^2 +3\s^2.
\end{split}
\label{H1a}
\end{align}
	
\noi
Then, the monomial $x^k$
can be expressed in term of the Hermite polynomials:
\begin{align}
x^k = \sum_{m = 0}^{[\frac{k}{2}]}
\begin{pmatrix}
k\\2m 
\end{pmatrix}
%(2m-1)!! \, 
\frac{(2m)!}{2^m m!}\, \s^m H_{k-2m}(x; \s).
\label{H2}
\end{align}

Fix $d \in \N$.\footnote{Indeed, the discussion presented here also holds for $d = \infty$ 
in the context of abstract Wiener spaces.
For simplicity, however, we consider only finite values for $d$.} Consider the Hilbert space $H = L^2(\R^d, \mu_d)$
endowed with  the Gaussian measure 
 $d\mu_d
= (2\pi)^{-\frac{d}{2}} \exp(-{|x|^2}/{2})dx$, $x = (x_1, \dots,
x_d)\in \R^d$. 
Hermite polynomials satisfy
\begin{equation}
\int_{\R} H_k(x)H_m(x) d\mu_1(x)= \delta_{k m} k!
\label{H3}
\end{equation}

\noi
for all  $k,m\in\N$.
Next, we define a {\it homogeneous Wiener chaos of order
$k$} to be an element of the form $\prod_{j = 1}^d H_{k_j}(x_j)$, 
where $k= k_1 + \cdots + k_d$
and $H_{k_j}$ is the Hermite polynomial of degree $k_j$ defined in \eqref{H1}. 
Denote the closure of homogeneous Wiener chaoses of order $k$
under $L^2(\R^d, \mu_d)$ by $\mathcal{H}_k$.
%\footnote{As a consequence
%of Lemma \ref{LEM:hyp1}, this closure can be taken in 
%any $L^p(\R^d, \mu_d)$-norm, $p>1$.}
Let $L := \Dl -x \cdot \nabla$ be 
 the Ornstein-Uhlenbeck operator. 
Then, 
it is known that 
any element in $\mathcal H_k$ 
is an eigenfunction of $L$ with eigenvalue $-k$
and that we have the Ito-Wiener decomposition:
\[ L^2(\R^d, \mu_d) = \bigoplus_{k = 0}^\infty \mathcal{H}_k.\]

\noi
Moreover, we have the following  hypercontractivity of the Ornstein-Uhlenbeck
semigroup $U(t) := e^{tL}$ due to Nelson \cite{Nelson2}.

\begin{lemma} \label{LEM:hyp1}
Let $q > 1$
and $p \geq q$.
Then, 
for every $u \in L^q (\R^d, \mu_d)$
and  $t \geq
\frac{1}{2}\log\big(\frac{p-1}{q-1}\big)$, we have
\begin{equation}\label{hyp1}
\|U(t) u \|_{L^p(\R^d, \mu_d)}\leq \|u\|_{L^q(\R^d, \mu_d)}.
\end{equation}
\end{lemma}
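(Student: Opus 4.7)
The plan is to follow Gross's approach, deriving Nelson's hypercontractivity inequality from the Gaussian logarithmic Sobolev inequality (LSI) by showing that the $L^{p(t)}$ norm along the flow is non-increasing for a carefully chosen time-dependent exponent. For $u \in L^q(\R^d,\mu_d)$, I would first approximate by smooth functions that are positive and bounded away from zero (removing this at the end by a standard truncation/density argument), set $v(t,x) := U(t)u(x)$, and define
\begin{equation*}
p(t) := 1 + (q-1)e^{2t}, \qquad \Phi(t) := \int_{\R^d} v(t,x)^{p(t)} \, d\mu_d(x).
\end{equation*}
Note that $p(0) = q$ and $p(t_0) = p$ exactly at $t_0 = \tfrac{1}{2}\log\tfrac{p-1}{q-1}$, and crucially $p'(t) = 2(p(t)-1)$. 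The goal is to show that $G(t) := \Phi(t)^{1/p(t)}$ is non-increasing on $[0,t_0]$, which immediately yields $\|U(t_0)u\|_{L^p} = G(t_0) \leq G(0) = \|u\|_{L^q}$.

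The next step is to carry out the differentiation. Since $\partial_t v = Lv$ with $L = \Dl - x\cdot\nabla$ the Ornstein--Uhlenbeck generator, I would compute
\begin{equation*}
\Phi'(t) = p'(t)\int v^{p(t)}\log v \, d\mu_d + p(t)\int v^{p(t)-1} Lv \, d\mu_d.
\end{equation*}
The Dirichlet form identity $\int f\, Lg\, d\mu_d = -\int \nabla f \cdot \nabla g \, d\mu_d$ gives
\begin{equation*}
\int v^{p-1} Lv \, d\mu_d = -(p-1)\int v^{p-2}|\nabla v|^2 d\mu_d = -\frac{4(p-1)}{p^2}\int |\nabla(v^{p/2})|^2 d\mu_d.
\end{equation*}
Setting $w := v^{p/2}$, so that $\Phi = \int w^2 d\mu_d$ and $\int w^2 \log w^2 d\mu_d = p\int v^p \log v\, d\mu_d$, a direct manipulation shows that the monotonicity $G'(t)\leq 0$ is equivalent to
\begin{equation*}
p'(t)\Bigl[\int w^2\log w^2 \, d\mu_d - \Bigl(\!\int w^2 d\mu_d\Bigr) \log\!\int w^2 d\mu_d\Bigr] \leq 4(p(t)-1)\int |\nabla w|^2 d\mu_d.
\end{equation*}
With my choice $p'(t) = 2(p(t)-1)$, this reduces exactly to the Gaussian LSI
\begin{equation*}
\int w^2 \log w^2 \, d\mu_d - \Bigl(\!\int w^2 \, d\mu_d\Bigr) \log\!\int w^2 \, d\mu_d \leq 2 \int |\nabla w|^2 \, d\mu_d.
\end{equation*}

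Thus the entire argument rests on establishing this Gaussian LSI. I would prove it by first verifying the corresponding two-point inequality on $\{-1,+1\}$ with uniform measure (a two-variable elementary calculus check), then tensorizing to obtain the LSI on $\{-1,+1\}^N$ with a dimension-free constant, and finally passing to $\mu_d$ via the central limit theorem applied to normalized sums of independent Rademacher variables---this is Gross's original scheme. Alternatively, one can bypass the discrete route by using Bakry--\'Emery $\Gamma_2$-calculus, exploiting the curvature bound $\Gamma_2(f) \geq \Gamma(f)$ for the Ornstein--Uhlenbeck operator.

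The main obstacle is the Gaussian LSI itself: it is the genuinely nontrivial analytic input and is not a consequence of elementary Hilbert space methods. The calculus steps, although they require justifying the differentiation under the integral (handled by smoothing of $U(t)$ for $t>0$ and dominated convergence on the positivity truncation $v\vee \delta$ with $\delta \downarrow 0$), are essentially mechanical. Once the LSI is in place, the monotonicity of $G(t)$ and hence the hypercontractivity bound follow by integration on $[0,t_0]$, with equality in the threshold case $t = \tfrac12\log\tfrac{p-1}{q-1}$ matching Nelson's sharp constant.
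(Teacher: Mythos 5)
Your argument is correct, but it is worth noting that the paper does not prove this lemma at all: it is quoted as Nelson's theorem with a citation to \cite{Nelson2} (see also \cite{Simon}), and it enters the paper only through its corollaries, Lemmas \ref{LEM:hyp2} and \ref{LEM:hyp3}. What you propose is Gross's semigroup proof: with $p(t)=1+(q-1)e^{2t}$ one has $p'(t)=2(p(t)-1)$, and differentiating $\Phi(t)^{1/p(t)}$ along the Ornstein--Uhlenbeck flow, using $\int f\,Lg\,d\mu_d=-\int\nabla f\cdot\nabla g\,d\mu_d$ and the substitution $w=v^{p/2}$, reduces monotonicity exactly to the Gaussian logarithmic Sobolev inequality with its sharp constant $2$; your bookkeeping here is accurate, and you correctly identify the LSI (via the two-point inequality, tensorization and the central limit theorem, or via Bakry--\'Emery) as the one genuinely nontrivial input rather than something obtainable by soft Hilbert-space arguments. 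This route differs from Nelson's original argument, which proceeds by direct estimates on the Mehler kernel, but it buys precisely the feature the paper emphasizes after the lemma, namely that the constant is independent of the dimension $d$, since both the LSI constant and the tensorization are dimension-free. Two small points to tidy up: the statement asks for all $t\geq\frac12\log\frac{p-1}{q-1}$, so after obtaining the bound at $t_0=\frac12\log\frac{p-1}{q-1}$ you should either invoke the fact that $U(s)$ is a contraction on $L^p(\mu_d)$ (positivity of the Mehler kernel plus Jensen) and write $U(t)=U(t-t_0)U(t_0)$, or simply note that $p(t)\geq p$ for $t\geq t_0$ and that $\|\cdot\|_{L^p(\mu_d)}\leq\|\cdot\|_{L^{p(t)}(\mu_d)}$ because $\mu_d$ is a probability measure; and the reduction to nonnegative $u$ should be recorded via $|U(t)u|\leq U(t)|u|$, again by positivity of the kernel, before the truncation $v\vee\delta$ and the justification of differentiation under the integral that you already indicate.
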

We stress that 
 \eqref{hyp1} holds, independent of the dimension $d$.
As a consequence, we obtain the following corollary to Lemma \ref{LEM:hyp1}.
\begin{lemma} \label{LEM:hyp2}
Let $F \in \mathcal{H}_k$.
Then, for $p \geq 2$, we have
\begin{equation} \label{hyp2}
\| F \|_{L^p(\R^d, \mu_d)}\leq (p-1)^\frac{k}{2}
\|F\|_{L^2(\R^d, \mu_d)}.
\end{equation}
\end{lemma}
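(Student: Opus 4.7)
The plan is to deduce this from Nelson's hypercontractivity (Lemma \ref{LEM:hyp1}) together with the spectral characterization of $\mathcal{H}_k$ recalled in the excerpt, namely that every $F \in \mathcal{H}_k$ is an eigenfunction of the Ornstein-Uhlenbeck operator $L$ with eigenvalue $-k$. This immediately gives a simple closed form for the action of the semigroup: $U(t)F = e^{tL}F = e^{-kt}F$ for every $t \geq 0$.

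Given this, the strategy is straightforward. Take $q = 2$ and any $p \geq 2$ in Lemma \ref{LEM:hyp1}, and pick the smallest admissible time, namely
\[
t_\ast = \tfrac{1}{2}\log(p-1),
\]
which satisfies $t_\ast \geq 0$ since $p \geq 2$ and matches the threshold $t \geq \frac{1}{2}\log\!\big(\frac{p-1}{q-1}\big)$ appearing in \eqref{hyp1}. Then Lemma \ref{LEM:hyp1} yields
\[
\|U(t_\ast)F\|_{L^p(\R^d,\mu_d)} \leq \|F\|_{L^2(\R^d,\mu_d)}.
\]
Using $U(t_\ast)F = e^{-k t_\ast} F$ on the left-hand side and dividing through by $e^{-kt_\ast}$ gives
\[
\|F\|_{L^p(\R^d,\mu_d)} \leq e^{k t_\ast}\|F\|_{L^2(\R^d,\mu_d)} = (p-1)^{k/2}\|F\|_{L^2(\R^d,\mu_d)},
\]
which is exactly \eqref{hyp2}.

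There is essentially no obstacle here: the content is in Lemma \ref{LEM:hyp1} (Nelson's theorem) and in the eigenvalue identification $L F = -k F$ for $F \in \mathcal{H}_k$, both of which are quoted just above. The only thing worth flagging is that one should note $t_\ast \geq 0$ so that $U(t_\ast)$ is indeed the forward semigroup to which Lemma \ref{LEM:hyp1} applies, and that the estimate extends from the homogeneous chaos generators $\prod_j H_{k_j}(x_j)$ with $\sum_j k_j = k$ (all of which are $-k$-eigenfunctions of $L$) to their $L^2$-closure $\mathcal{H}_k$ by density, since both sides of \eqref{hyp2} are continuous in $F$ with respect to the $L^2(\R^d,\mu_d)$-norm on the dense subspace (after first establishing the bound there).
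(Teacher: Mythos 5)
Your proof is correct and follows exactly the paper's argument: use that $F \in \mathcal{H}_k$ satisfies $U(t)F = e^{-kt}F$, apply Lemma \ref{LEM:hyp1} with $q = 2$ and $t = \tfrac12\log(p-1)$, and divide by $e^{-kt}$. The density remark is a harmless extra precaution; nothing differs in substance from the paper's proof.
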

The estimate \eqref{hyp2} follows immediately
from noting that 
 $F$ is an eigenfunction
of $U(t) = e^{tL}$ with eigenvalue $e^{-kt}$
and setting  $q = 2$ and $t =\frac{1}{2} \log (p - 1)$ in  \eqref{hyp1}.
As a further consequence to Lemma \ref{LEM:hyp2}, we obtain the following lemma
\cite[Theorem I.22]{Simon}.
\begin{lemma}\label{LEM:hyp3}
Fix $k \in \mathbb{N}$ and $c(n_1, \dots, n_k) \in \C$.
Given 	
 $d \in \mathbb{N}$, 
 let $\{ g_n\}_{n = 1}^d$ be 
 a sequence of  independent standard complex-valued Gaussian random variables
 and set $g_{-n} = \cj{g_n}$.
Define $S_k(\o)$ by 
\begin{align*}
 S_k(\o) = \sum_{\G(k, d)} c(n_1, \dots, n_k) g_{n_1} (\o)\cdots g_{n_k}(\o),
%\label{hyp3}
 \end{align*}

\noi
where $\G(k, d)$ is defined by
\[ \G(k, d) = \big\{ (n_1, \dots, n_k) \in \{ \pm1, \dots,\pm d\}^k \big\}.\]
Then, for $p \geq 2$, we have
\begin{equation}
 \|S_k \|_{L^p(\O)} \leq (p-1)^\frac{k}{2}\|S_k\|_{L^2(\O)}.
\label{hyp4}
 \end{equation}
\end{lemma}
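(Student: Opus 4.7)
The plan is to deduce Lemma \ref{LEM:hyp3} from the polynomial hypercontractivity bound by identifying $S_k$ as a complex-valued polynomial of total degree $k$ in a finite family of real Gaussian variables. Writing $g_n = (X_n + iY_n)/\sqrt{2}$ for $n=1,\dots,d$ with $\{X_n,Y_n\}_{n=1}^d$ independent real standard Gaussians, each monomial $g_{n_1}\cdots g_{n_k}$ appearing in $S_k$ (with $n_j \in \{\pm 1,\dots,\pm d\}$) becomes a monomial of degree exactly $k$ in the $2d$ underlying real variables. Hence $S_k$ admits a unique Ito-Wiener decomposition
\[
S_k = \sum_{j=0}^k S_{k,j}, \qquad S_{k,j}\in \mathcal{H}_j,
\]
in which each $S_{k,j}$ is a $(-j)$-eigenfunction of the Ornstein-Uhlenbeck generator $L$.

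Next, I would run the standard semigroup trick to promote Lemma \ref{LEM:hyp2} from a single homogeneous chaos to the whole polynomial $S_k$. Setting $t_0 = \tfrac12 \log(p-1)$ so that $e^{-2t_0} = (p-1)^{-1}$, define
\[
Q = \sum_{j=0}^k (p-1)^{j/2}\, S_{k,j}.
\]
By construction $U(t_0)Q = \sum_{j=0}^k e^{-jt_0}(p-1)^{j/2} S_{k,j} = S_k$, and by orthogonality of the Wiener chaoses,
\[
\|Q\|_{L^2}^2 = \sum_{j=0}^k (p-1)^j \|S_{k,j}\|_{L^2}^2 \leq (p-1)^k \|S_k\|_{L^2}^2.
\]
Applying Lemma \ref{LEM:hyp1} with $q=2$ and $t = t_0$ (for which the threshold is exactly met) then yields
\[
\|S_k\|_{L^p} = \|U(t_0)Q\|_{L^p} \leq \|Q\|_{L^2} \leq (p-1)^{k/2} \|S_k\|_{L^2},
\]
which is the desired estimate.

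The main obstacle is that Lemma \ref{LEM:hyp1} is formulated for real-valued $u$, whereas $S_k$ is genuinely complex-valued. I would handle this via the Mehler representation of the Ornstein-Uhlenbeck semigroup: $U(t)$ is an integral operator with a positive kernel against the Gaussian measure, so for any complex-valued $f$ one has the pointwise domination $|U(t)f(x)| \leq U(t)|f|(x)$, and therefore
\[
\|U(t)f\|_{L^p} \leq \|U(t)|f|\|_{L^p} \leq \||f|\|_{L^q} = \|f\|_{L^q}
\]
by applying the real-valued hypercontractivity to the nonnegative function $|f|$. With this complex extension of Lemma \ref{LEM:hyp1} in hand, the semigroup computation above applies verbatim to the complex polynomial $S_k$, completing the proof.
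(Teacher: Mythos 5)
Your proof is correct and follows the standard argument that the paper alludes to with its one-line proof (``This follows from \eqref{H2} and Lemma \ref{LEM:hyp2}''). The key ingredients are exactly as you present them: expanding the complex Gaussians into $2d$ real Gaussians so that $S_k$ becomes a (generally non-homogeneous) polynomial of degree at most $k$; using the fact that a degree-$\le k$ polynomial has finite chaos decomposition $S_k=\sum_{j=0}^k S_{k,j}$ (this is where the paper's \eqref{H2} enters); and then the classical semigroup trick in which one introduces $Q=\sum_j(p-1)^{j/2}S_{k,j}$, so that $U(t_0)Q=S_k$ at the threshold time $t_0=\tfrac12\log(p-1)$, applies Nelson's inequality, and uses chaos orthogonality to bound $\|Q\|_{L^2}$.

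One small remark: the paper cites Lemma \ref{LEM:hyp2} (the single-chaos bound), whereas your semigroup argument appeals directly to Lemma \ref{LEM:hyp1}, which is what one actually needs once the decomposition is non-homogeneous. Your choice is the cleaner one and matches what Simon does in \cite[Theorem~I.22]{Simon}; a naive chaos-by-chaos application of Lemma \ref{LEM:hyp2} combined with the triangle inequality would lose a factor $\sqrt{k+1}$. Your treatment of the complex-valuedness via the positivity of the Mehler kernel is also correct and is the right way to avoid a spurious constant; the pointwise domination $|U(t)f|\le U(t)|f|$ follows from the probabilistic representation $U(t)f(x)=\E[f(e^{-t}x+\sqrt{1-e^{-2t}}\,Z)]$. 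This fills a detail the paper leaves implicit.
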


This  follows from \eqref{H2}
and Lemma \ref{LEM:hyp2}.
%See Proposition 2.4 in \cite{TTz} for details.
Once again, note that \eqref{hyp4} is independent of $d \in \N$.
Lemmas \ref{LEM:hyp2} and \ref{LEM:hyp3}
have been very effective
in the recent probabilistic study of dispersive PDEs
and related areas, see e.g.~\cite{Tzv, TTz, Benyi, CO,  BTT1}.

Lastly, we recall the following property of Wick products \cite[Theorem I.3]{Simon},
extending \eqref{H3} to a more general setting.
See also \cite[Lemma 1.1.1]{Nu}.

\begin{lemma}\label{LEM:Wick}
Let $f$ and $g$ be Gaussian random variables with variances $\s_f$
and $\s_g$.
Then, we have 
\begin{align*}
\E\big[ H_k(f; \s_f) H_m(g; \s_g)\big] = \dl_{km} k! \big\{\E[ f g] \big\}^k.
%\label{H4}
\end{align*}
\end{lemma}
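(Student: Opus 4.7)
The plan is to prove Lemma \ref{LEM:Wick} via the generating function \eqref{H1} for the (generalized) Hermite polynomials, reducing the identity to the well-known formula for the Gaussian moment generating function of the two-dimensional Gaussian vector $(f,g)$.

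First, I would fix parameters $s, t \in \mathbb{R}$ (to be treated as formal variables) and consider the product
\begin{align*}
F(s, f; \s_f)\, F(t, g; \s_g)
&= \exp\!\Big(sf - \tfrac{1}{2}\s_f s^{2}\Big)\, \exp\!\Big(tg - \tfrac{1}{2}\s_g t^{2}\Big) \\
&= \exp\!\Big( sf + tg - \tfrac{1}{2}\s_f s^{2} - \tfrac{1}{2}\s_g t^{2}\Big).
\end{align*}
Since $(f,g)$ is a centered Gaussian vector, $sf+tg$ is itself a centered Gaussian with variance $\s_f s^{2} + 2st\,\E[fg] + \s_g t^{2}$, so the standard Gaussian moment generating formula yields
\[
\E\!\left[ F(s, f; \s_f)\, F(t, g; \s_g)\right]
= \exp\!\big( st\, \E[fg]\big)
= \sum_{k = 0}^{\infty} \frac{(st)^{k}}{k!}\,\{\E[fg]\}^{k}.
\]

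Next, I would expand the same expectation using the series definition \eqref{H1}:
\[
\E\!\left[ F(s, f; \s_f)\, F(t, g; \s_g)\right]
= \sum_{k, m = 0}^{\infty} \frac{s^{k} t^{m}}{k!\, m!}\, \E\!\left[H_k(f; \s_f)\, H_m(g; \s_g)\right].
\]
The interchange of expectation and summation is routine: the Hermite polynomials have polynomial growth, $f$ and $g$ have finite moments of all orders, and for any fixed $(s,t)$ the partial sums are dominated by $|F(|s|, |f|; \s_f)|\cdot |F(|t|, |g|; \s_g)|$, which is integrable by the same Gaussian moment generating function computation. Hence dominated convergence applies.

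Finally, I would match coefficients of $s^{k} t^{m}$ in the two power-series expressions above. The right-hand side of the first identity contains only diagonal terms $k=m$, which immediately forces $\E[H_k(f;\s_f)H_m(g;\s_g)] = 0$ for $k \ne m$, and for $k = m$ we read off
\[
\frac{1}{(k!)^{2}}\, \E\!\left[H_k(f;\s_f)\, H_k(g;\s_g)\right] = \frac{1}{k!}\,\{\E[fg]\}^{k},
\]
giving the claimed formula with the Kronecker delta $\dl_{km}$ and the combinatorial factor $k!$. I expect no serious obstacle here; the only minor point to handle carefully is the justification for exchanging $\E$ and the double sum, which the Gaussian tails and the entirety of the generating function in $(s,t)$ make straightforward.
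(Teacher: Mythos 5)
Your generating-function argument is correct and is essentially the paper's approach: the paper does not prove Lemma \ref{LEM:Wick} but quotes it from \cite[Theorem I.3]{Simon} and \cite[Lemma 1.1.1]{Nu}, where the proof is exactly this computation, namely $\E\big[ F(s, f; \s_f)\, F(t, g; \s_g)\big] = e^{st\,\E[fg]}$ for the jointly Gaussian (centered) pair $(f,g)$ followed by matching coefficients of $s^k t^m$. One small imprecision: since $H_k(\,\cdot\,;\s)$ changes sign, the partial sums of the Hermite series are not pointwise dominated by $F(|s|,|f|;\s_f)F(|t|,|g|;\s_g)$; the interchange is more cleanly justified by noting that the truncated series converges to $F(s,f;\s_f)$ in $L^2(\O)$ (by orthogonality, the tail has $L^2$ norm $\sum_{k>K} s^{2k}\s_f^k/k! \to 0$), and likewise for $g$, so the expectations of the products converge by Cauchy--Schwarz.
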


\begin{proof}[Proof of Proposition~\ref{PROP:sconv}]
First note that it suffices to prove the proposition for large $p \geq 1$,
since $L^{p_1}(\O) \subset L^{p_2}(\O)$ for $p_1 \geq p_2$.
From \eqref{G0},  we have 
\begin{align}
 \mathbb{E} [\Psi_N (t_1, x)  \Psi_N(t_2, y)] 
 = \sum_{n \in \mathbb{Z}^2_N} e_n (x-y)
  \int_0^t \frac{\sin ((t_1 - \tau ) | n |)}{| n |} \frac{\sin ((t_2 - \tau)
   | n |)}{| n |} d\tau ,
\label{G1}
\end{align}

\noi
where $t = \min(t_1, t_2)$.
Define  $\g(n,t)$  by
\[
   \g(n,t) \stackrel{\text{def}}{=}\int_0^t \bigg[ \frac{\sin ((t - \tau) | n |)}{| n |} \bigg]^2 d\tau.
 \]

\noi
By applying the Bessel potentials $\jb{\nb_x}^{-\eps}$ 
and $\jb{\nb_y}^{-\eps}$  of order $\eps$
and then setting $x = y$ (and $t_1 = t_2$), 
we obtain
\[ \E \big[| \jb{\nb}^{-\eps} \Psi_N (x, t) |^2\big] 
= \sum_{n \in \Z^2_N} \jb{n}^{-2\eps} \g(n,t)
   \leq t^3  + t \sum_{n \in \Z^2_N}
   \frac{1}{\jb{n}^{2+2\eps}} \les t^3 + t \]

\noi
for any $\eps > 0$, $x \in \T^2$, and $t > 0$, 
uniformly in $N \in \mathbb{N}$.
In particular, by the hypercontractivity (Lemma \ref{LEM:hyp3}), we have
\[ \E\big[ | \jb{\nb}^{-\eps} \Psi_N (t, x) |^p\big] \les_{p, t} 1 \]

\noi
and thus
\[\E \big[\| \Psi_N (\cdot, t) \|_{W^{-\eps,p}}^p \big]
=  \E \big[\| \jb{\nb}^{-\eps} \Psi_N (\cdot, t) \|_{L^p (\T^2)}^p \big]
< \infty \]

\noi
for any $\eps > 0$, $t > 0$,  and $p \geq 1$, uniformly in $N\in \mathbb{N}$. 

By Lemma \ref{LEM:Wick} 
and \eqref{G1},  we have
\begin{align*}
 \E \big[:\! \Psi^{\l}_N (x, t) \!:\,  :\! \Psi^{\l}_N (y, t) \!:\big] 
 & = \l! \big\{\mathbb{E} [\Psi_N (x, t) \Psi_N (y, t)]\big\}^{\l} \\
  & =\l! \sum_{n_1, \ldots, n_{\ell} \in \mathbb{Z}^2_N} \gamma (n_1, t) \cdots
  \gamma (n_{\ell}, t) e_{n_1} (x - y) \cdots e_{n_{\ell}} (x - y) \\
  & = \l!\sum_{n_1, \ldots, n_{\ell} \in \mathbb{Z}^2_N} \gamma (n_1, t) \cdots
  \gamma (n_{\ell}, t) e_{n_1 + \cdots + n_{\ell}} (x - y) .
\end{align*}

\noi
Proceeding as before, we obtain
\begin{align*}
\E\big[| (\jb{\nb}^{-\eps} :\! \Psi^{\l}_N (\cdot, t) \!:) (x) |^2 \big] 
& = \l! \sum_{n_1,
   \ldots, n_{\ell} \in \mathbb{Z}^2_N} \langle n_1 + \cdots + n_{\ell}\rangle^{-2\eps} 
   \gamma (n_1, t) \cdots \gamma (n_{\ell}, t) \\
& \les_t \sum_{n_1, \ldots, n_{\ell} \in \mathbb{Z}^2} \frac{1}{\langle n_1
   \rangle^2 \cdots \langle n_{\ell} \rangle^2 \langle n_1 + \cdots + n_{\ell}
   \rangle^{2\eps}} < \infty
\end{align*}

\noi
\noi
for any $\eps > 0$, $x \in \T^2$, and $t > 0$, 
uniformly in $N$.
Hence,  by the hypercontractivity (Lemma \ref{LEM:hyp3}), 
we have
\begin{align*}
\E\big[\|  :\! \Psi^{\l}_N (\cdot, t) \!:\|_{W^{-\eps, p}}^p \big] 
< \infty
\end{align*}

\noi
for any $\eps > 0$, $t > 0$,  and $p \geq 1$, uniformly in $N\in \mathbb{N}$.

In order to analyze the time regularity, 
we have to estimate moments of the random field
\[\delta_h \! :\!\Psi^{\ell}_N (x, t) \!: \, \, \stackrel{\text{def}}{=}\,  \, : \!\Psi^{\ell}_N (x, t + h) \!: - :\! \Psi_N^{\ell} (x, t)\! :\] 

\noi
for $h \in [-1, 1]$.
In the following, we proceed as above and estimate
\[\mathbb{E} \big[|\dl_h (\jb{\nb}^{-\eps} : \! \Psi_N^{\ell} (\cdot, t ) \!:) (x) |^2 \big].\]

\noi 
By applying Lemma \ref{LEM:Wick} once again, we have
\begin{align*}
\frac{1}{\l!}\E \big[\delta_h \! :\! \Psi^{\ell}_N & (x, t) \!: \,  \delta_h\! :\! \Psi^{\ell}_N (y, t)\!:  \big]\notag\\
 & =  \big\{\mathbb{E} [\Psi_N (x, t + h) \Psi_N (y, t + h)]\big\}^\l 
    - \big\{\E [\Psi_N ( x, t) \Psi_N (y, t + h)]\big\}^\l \\
  & \hphantom{XX}
    - \big\{\E [\Psi_N (x, t + h) \Psi_N (y, t)]\big\}^\l 
+   \big\{\E [\Psi_N (x, t) \Psi_N (y, t)]\big\}^\l \nonumber\\
  & = \mathbb{E} [\dl_h \Psi_N ( x, t) \Psi_N (y, t + h)] \\
& \hphantom{XX}
\times   \sum_{j = 0}^{\l-1}    \big\{\mathbb{E} [ \Psi_N ( x, t+h) \Psi_N (y, t + h)]\big\}^{\l-j-1} 
   \big\{\mathbb{E} [ \Psi_N ( x, t) \Psi_N (y, t + h)]\big\}^j \\
& \hphantom{X}
- \mathbb{E} [\dl_h \Psi_N ( x, t) \Psi_N (y, t )] \\
& 
\hphantom{XX}
\times   \sum_{j = 0}^{\l-1}    \big\{\mathbb{E} [ \Psi_N ( x, t+h) \Psi_N (y, t )]\big\}^{\l-j-1} 
   \big\{\mathbb{E} [ \Psi_N ( x, t) \Psi_N (y, t )]\big\}^j .
\end{align*}

%\noi
By reasoning as before, in order to estimate 
$\mathbb{E} \big[|\dl_h (\jb{\nb}^{-\eps} : \! \Psi^{\ell} (\cdot, t ) \!:) (x) |^2 \big]$,
we are led to bound
sums of the form
\begin{align}
 S_{h, \eps} = \sum_{n_1, \ldots, n_{\ell} \in \mathbb{Z}^2_N} \langle n_1 + \cdots +
   n_{\ell}\rangle^{-2\eps} G_1 (n_1, t, h) \cdots G_{\ell} (n_{\ell}, t, h),  
\label{G2}
\end{align}

\noi
where $G_i(n, t)$ is given by 
\begin{align*}
G_1 (n, t) & =\E [\dl_h \ft \Psi_N (n, t)  \ft{\Psi}_N ( n, t_1)], \\
G_i (n, t) & =\E [\ft \Psi_N (n, t_1) \ft \Psi_N (n, t_2)], \quad i = 2, \dots, \l, 
\end{align*}

\noi
with $t_1, t_2 \in \{t, t+h\}$.
Here, $\ft \Psi_N (n, t)$ denotes the spatial Fourier transform of $\Psi_N(t)$.  
A direct computation with \eqref{G0} gives 
\begin{align} 
\big| \E [\ft \Psi_N (n, t_1) \ft \Psi_N (n, t_2)] \big| 
\les_t \frac{1}{\jb{n}^2}
\qquad \text{and}\qquad 
\big| \E [\dl_h \ft \Psi_N (n, t) \ft {\Psi}_N (n, t_1)] \big| 
\les_t \frac{|h|^{\rho}}{\jb{n}^{2 -\rho}} 
\label{G3}
\end{align}

\noi   
for any $\rho \in [0, 1]$,
where the implicit constants are independent of  $h \in [-1, 1]$.
Note that the second estimate follows
from interpolating 
\begin{align*} 
\big| \E [\dl_h \ft \Psi_N (n, t) \ft {\Psi}_N (n, t_1)] \big| 
\les_t \frac{1}{\jb{n}^2}
\qquad \text{and}
\qquad 
\big| \E [\dl_h \ft \Psi_N (n, t) \ft {\Psi}_N (n, t_1)] \big| 
\les_t \frac{|h|}{\jb{n}}, 
\end{align*}

\noi
where the second bound follows from the mean value theorem.
As a consequence, it follows from \eqref{G2} and \eqref{G3} that 
\[ | S_{h, \eps} | \les  |h|^\rho \]

\noi
for any $h \in [-1,1]$, $\eps > 0$,  and  $\rho \in [0, 1]$ such that $2 \eps -\rho >0$. 
This in turn implies that
\[ \E \big[| \dl_h ( \jb{\nb}^{-\eps} : \!\Psi^{\l}_N (\cdot, t) \!:) (x) |^2 \big]
   \les  |h|^{\rho}. \]

\noi
Then, by the hypercontractivity (Lemma \ref{LEM:hyp3}), 
this results in 
\begin{align*}
 \E \Big[\big\| \delta_h (: \!\Psi^{\ell}_N (\cdot, t) \!:) 
\big\|_{W^{- \eps,p}}^p \Big]
\les_{p, t} |h|^{\frac{p}{2}\rho}, 
\end{align*}

\noi
for any $h \in [-1,1]$, $\rho \in [0,1]$, and $\eps>0$ such that $2\eps > \rho$.
Hence,  it follows from  Sobolev's embedding theorem that,
given $\eps > 0$, we have
\begin{align*}
 \E \Big[\big\| \delta_h (: \!\Psi^{\ell}_N (\cdot, t) \!:) 
\big\|_{W^{- \eps,\infty}}^p \Big]
\les 
 \E \Big[\big\| \delta_h (: \!\Psi^{\ell}_N (\cdot, t) \!:) 
\big\|_{W^{- \frac{\eps}{2},p}}^p \Big]
\les_{p, t} |h|^{\frac{p}{2}\rho}, 
\end{align*}

\noi
for $p$ sufficiently large such  that $\eps p > 4$.
Moreover, for fixed  $\rho \in (0, 2\eps)$, 
we can choose $p \gg1 $ such that $\frac p2 \rho > 1$,
allowing us to apply  Kolmogorov's continuity criterion (see \cite[Theorem 8.2]{Bass})
and conclude that 
$ :\!\Psi^\l_N \!:\,  \in C([0, T]; W^{-\eps, \infty}(\T^2))$
almost surely,  for any $T > 0$ and $\eps > 0$.

A similar argument also leads  to the following estimate:
\[ \E \big[| \dl_h (\jb{\nb}^{-\eps} (:\! \Psi^{\ell}_N (\cdot, t)\! :  -  :\! \Psi^{\ell}_M (\cdot, t) \!:)) (x) |^2 \big]
   \les_t  \frac{|h|^{\rho}}{N^{2\kappa}} \]

\noi
for all  $M\ge N \ge 1$, $\kappa >0$,  $\eps >0$,
and $\rho\in[0,1]$
 such that $2\eps - 2\kappa - \rho>0$.
By the hypercontractivity (Lemma \ref{LEM:hyp3}), 
this results in 
\[ \E \Big[\big\| \delta_h (: \!\Psi^{\ell}_N (\cdot, t) \!: - : \!\Psi^{\l}_M (\cdot, t) \!:) 
\big\|_{W^{- \eps,p}}^p \Big]
\les_{p, t} \frac{|h|^{\frac{p}{2}\rho}}{N^{\kappa p}}, \]

\noi
for any $\rho \in [0,1]$ and $\eps,\kappa >0$ such that 
\begin{align}
2\eps > \rho + 2\kappa.
\label{G4}
\end{align} 

\noi
As before, 
by Sobolev's embedding theorem
and 
Kolmogorov's continuity criterion, 
 we deduce that for any $T > 0$ and $\eps > 0$,
  there exists large $p\gg1$ such that 
$\{ :\!\Psi^\l_N \!:  \}_{N \in \mathbb{N}}$ 
is a Cauchy sequence in $L^p(\Omega;  C ([0, T] ; W^{- \eps,\infty}(\T^2)))$. 
Denoting  the corresponding limit
by $: \!\Psi^{\ell} \!:$ as in \eqref{Wick2}, 
we conclude that  $: \!\Psi^{\ell} \!:\, \in C ([0, T] ; W^{- \eps,\infty}(\T^2))$ almost surely.
\end{proof}

\begin{remark} \rm
From the application of Kolmogorov's continuity criterion (see \cite[Exercise 8.2]{Bass}), 
we see that 
$:\! \Psi^{\ell}\!\! : \, \in C^\al  ([0, T] ; W^{- \eps,\infty}(\T^2))$,
$\al < \frac{\rho}{2} - \frac 1p$,  almost surely,
provided that \eqref{G4} is satisfied.
In particular,  by taking $p \to \infty$ and $\kappa \to 0$, 
we see that $\al + (- \eps) < 0$,
namely, the sum of the temporal and spatial regularities must be negative.

\end{remark}

\section{Proof of Theorem \ref{THM:1}}
\label{SEC:3}

In this section, we present the proof of Theorem \ref{THM:1}.
In particular, we study the fixed point problem
\eqref{SNLW9}
by constructing a pathwise contraction
in a suitable function space.

\subsection{Strichartz estimates}
\label{SUBSEC:3.1}

We first recall the Strichartz estimates
for the  linear wave equation.
Given  $0 < s < 1$, 
we say that a pair $(q, r)$ is $s$-admissible 
(a pair $(\wt q, \wt r)$ is dual $s$-admissible,\footnote{Here, we define
the notion of dual $s$-admissibility for the convenience of the presentation.
Note that $(\wt q, \wt r)$ is dual $s$-admissible
if and only if $(\wt q', \wt r')$ is $(1-s)$-admissible.}
 respectively)
if $1 \leq \wt q < 2 < q \leq \infty$, 
 $1< \wt r \le 2 \leq r < \infty$, 
\begin{align}
 \frac{1}{q} + \frac 2r  = 1-  s =  \frac1{\wt q}+ \frac2{\wt r} -2, 
\qquad
\frac 2q + \frac{1}{r} \leq \frac 1 2, 
\qquad \text{and} 
\qquad  
\frac2{\wt q}+\frac1{\wt r} \geq \frac52   .
\label{admis1}
\end{align}

\noi
We refer to the first two equalities as the scaling conditions
and the last two inequalities as the admissibility conditions.

Let us now state a lemma, providing a more direct description of the admissible exponents.

\begin{lemma} \label{LEM:pair} 
Let $0< s<1$.
A pair $(q,r)$ is $s$-admissible if 
\begin{align}
  \frac{1}{q} + \frac 2r  = 1-  s 
\qquad  
\text{and} \qquad
2\le r \le 
\begin{cases}
 \frac6{3-4s}, & \text{if } s < \frac34, \\ 
\infty, & \text{otherwise}. 
\end{cases}
\label{admis2}
\end{align}

\noi
A pair $(\wt  q,\wt  r)$ is dual $s$-admissible if 
\begin{align} 
\frac1{\wt q} + \frac2{\wt r} = 3 - s 
\qquad  
\text{and} \qquad
\max\bigg\{1+, \frac6{7-4s} \bigg\}  \le   \wt r \le \frac2{2-s}. 
\label{admis3}
\end{align}

\end{lemma}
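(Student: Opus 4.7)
The plan is to use the scaling identity in \eqref{admis1} to eliminate $q$ (respectively $\wt q$) and rewrite the remaining inequality purely in terms of $r$ (respectively $\wt r$). Since the definition already encodes $1\le \wt q<2<q\le\infty$ and $1<\wt r\le 2\le r<\infty$, the work is reduced to a short, direct algebraic manipulation; the only thing to watch is which of several competing one-sided bounds is actually binding in each regime of $s$.

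For the admissible case, I would set $\frac{1}{q}=1-s-\frac{2}{r}$ using the scaling equality in \eqref{admis1} and plug into the admissibility inequality $\frac{2}{q}+\frac{1}{r}\le\frac12$. This yields
\[
2(1-s)-\frac{4}{r}+\frac{1}{r}\le\frac12
\qquad\Longleftrightarrow\qquad
\frac{3}{r}\ge\frac{3}{2}-2s
\qquad\Longleftrightarrow\qquad
\frac{1}{r}\ge\frac{3-4s}{6}.
\]
When $s<\tfrac34$ the right side is positive and one recovers $r\le\frac{6}{3-4s}$; when $s\ge\tfrac34$ the inequality is automatic, giving $r\le\infty$, which matches the second line of \eqref{admis2}. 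The lower bound $r\ge 2$ is recorded as in \eqref{admis1}, and one checks that combined with the scaling condition it is consistent with $2<q\le\infty$ (indeed $\tfrac{1}{q}=1-s-\tfrac{2}{r}\ge 0$ whenever $r\ge\tfrac{2}{1-s}$, which in turn holds on the admissible range).

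For the dual case the same substitution strategy works. Write $\frac{1}{\wt q}=3-s-\frac{2}{\wt r}$ from the scaling equality in \eqref{admis1} and insert into $\frac{2}{\wt q}+\frac{1}{\wt r}\ge\frac52$. This gives
\[
2(3-s)-\frac{4}{\wt r}+\frac{1}{\wt r}\ge\frac52
\qquad\Longleftrightarrow\qquad
\frac{3}{\wt r}\le\frac{7}{2}-2s
\qquad\Longleftrightarrow\qquad
\wt r\ge\frac{6}{7-4s}.
\]
The upper endpoint $\wt r\le\frac{2}{2-s}$ in \eqref{admis3} then comes from $\wt q\ge 1$, i.e.\ $\frac{1}{\wt q}\le 1$, which through the scaling reads $\frac{2}{\wt r}\ge 2-s$. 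Finally, one must intersect with the requirement $\wt r>1$ (equivalently $\wt q<2$ via the same scaling), which is strictly stronger than $\wt r\ge\frac{6}{7-4s}$ only for small $s$; this accounts for the $\max\{1+,\,\tfrac{6}{7-4s}\}$ appearing in \eqref{admis3}.

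There is no genuine obstacle here; the only mild subtlety is bookkeeping the competing lower bounds in the dual case and confirming that in the admissible case the scaling automatically implies $q>2$ on the stated range of $r$, so that \eqref{admis2} and \eqref{admis3} are sufficient for the full set of constraints in \eqref{admis1}.
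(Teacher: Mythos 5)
The paper gives no proof of Lemma~\ref{LEM:pair}, so there is no internal argument to compare against; your route---eliminate $q$ (resp.\ $\wt q$) via the scaling identity and rewrite the remaining admissibility inequality purely in $r$ (resp.\ $\wt r$)---is the natural one, and your central computations yielding $r \le \frac{6}{3-4s}$, $\wt r \ge \frac{6}{7-4s}$, and $\wt r \le \frac{2}{2-s}$ from $\wt q \ge 1$ are all correct.

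Two of your consistency checks contain slips, though neither is fatal. First, you claim $r \ge \frac{2}{1-s}$ holds throughout $[2,\frac{6}{3-4s}]$; this is false for $0<s<1$, since $\frac{2}{1-s}>2$. The correct bookkeeping is that insisting $q\in[1,\infty]$ together with the scaling relation raises the effective lower endpoint of $r$ from $2$ to $\frac{2}{1-s}$, and one should then verify $\frac{2}{1-s}\le\frac{6}{3-4s}$ (true for $0<s<1$) so the range is nonempty; read as a bare implication, \eqref{admis2} is vacuously satisfied for $r<\frac{2}{1-s}$ because no legitimate $q$ exists. Second, you assert that $\wt r>1$ is ``equivalently $\wt q<2$ via the same scaling''; via the scaling, $\wt q<2$ is instead $\wt r>\frac{4}{5-2s}$, a genuinely different constraint. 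It happens to be dominated by the lower bound $\max\{1+,\frac{6}{7-4s}\}$ for all $0<s<1$ (one checks $\frac{4}{5-2s}\le\frac{6}{7-4s}$ when $s\ge\frac14$, and $\frac{4}{5-2s}<1$ when $s<\frac14$), so \eqref{admis3} is still correct, but you should verify this domination rather than conflate the two conditions.
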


We say that  $u$ is a solution to the following nonhomogeneous linear wave equation:
\begin{align}
\begin{cases}
\dt^2 u -\Delta u = f \\
( u, \dt u) |_{t = 0}=(\phi_0,  \phi_1) 
\end{cases}
\label{NLW1}
\end{align}

\noi
on a time interval containing $t= 0$, 
if $u$ satisfies the following Duhamel formulation:
\[ u =  \cos (t|\nb|) \phi_0 +   \frac{\sin (t|\nb|)}{ |\nb|} \phi_1 
+ \int_0^t  \frac{\sin ((t-\tau)|\nb|)}{ |\nb|}   f(\tau) d\tau . \]  

\noi
We now recall the  Strichartz estimates
for solutions to the nonhomogeneous linear wave equation
\eqref{NLW1}.

\begin{lemma}\label{LEM:Str}
Given $0 < s < 1$,
let $(q, r)$ and $(\wt q,\wt r)$
be $s$-admissible and dual $s$-admissible pairs, respectively. 
Then, a solution $u$ to the nonhomogeneous linear wave equation \eqref{NLW1}
satisfies
\begin{align}
\| (u, \dt u) \|_ {L^\infty_T \H^s } + 
 \| u  \|_{L^q_TL^r_x}
\lesssim 
\|(\phi_0, \phi) \|_{\H^s}  +  \| f \|_{L^{\wt q}_TL^{\wt r}_x},
\label{Str1}
\end{align}

\noi
for all $0 < T \leq 1$. 
The following estimate also holds:
\begin{align}
\| (u, \dt u) \|_ {L^\infty_T \H^s } 
+  \| u  \|_{L^q_TL^r_x}
\lesssim 
\|(\phi_0, \phi) \|_{\H^s}  +  \| f \|_{L^{1}_T H^{s-1}_x},
\label{Str2}
\end{align}

\noi
for all $0 < T \leq 1$.
Here, we used a shorthand notation
$L^q_TL^r_x$ = $L^q([0, T]; L^r(\T^2))$, etc.
\end{lemma}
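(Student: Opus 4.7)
The plan is to reduce everything to the homogeneous Strichartz estimate for the free wave propagator on $\T^2$ and then invoke duality and the Christ--Kiselev lemma. First, I would establish (or invoke from the literature) the homogeneous bound
\[
\|S(t)(\phi_0, \phi_1)\|_{L^q_T L^r_x} \lesssim \|(\phi_0, \phi_1)\|_{\H^s}, \qquad 0 < T \leq 1,
\]
for any $s$-admissible pair $(q, r)$ with $q > 2$. On $\T^2$, such a local-in-time estimate follows from the Euclidean Strichartz inequality of Keel--Tao by dyadic Littlewood--Paley decomposition combined with finite speed of propagation on time intervals of length $\leq 1$. The scaling $\frac{1}{q}+\frac{2}{r}=1-s$ and the admissibility $\frac{2}{q}+\frac{1}{r}\leq\frac{1}{2}$ are precisely the sharp wave-admissibility conditions in dimension two, and the strict inequality $q > 2$ keeps us away from the Keel--Tao endpoint.

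The bound \eqref{Str2} then follows by Minkowski's inequality in time: writing the Duhamel term as $\int_0^t \frac{\sin((t-\tau)|\nb|)}{|\nb|} f(\tau)\,d\tau$ and pulling the $L^q_T L^r_x$ norm inside the $\tau$-integral, each integrand is the free evolution $S(t-\tau)(0, f(\tau))$, to which the homogeneous estimate applies with the control $\|(0, f(\tau))\|_{\H^s} \sim \|f(\tau)\|_{H^{s-1}}$. Integrating in $\tau \in [0, T]$ produces the $L^1_T H^{s-1}_x$ bound on the right-hand side. The same argument applied to the energy-type norm yields the corresponding bound on $(u, \partial_t u)$ in $L^\infty_T \H^s$.

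For \eqref{Str1}, a standard $TT^*$ argument produces the full-line estimate
\[
\left\|\int_{\R} \frac{\sin((t-\tau)|\nb|)}{|\nb|} f(\tau)\,d\tau\right\|_{L^q_T L^r_x} \lesssim \|f\|_{L^{\wt q}_T L^{\wt r}_x},
\]
whenever $(q, r)$ is $s$-admissible and $(\wt q, \wt r)$ is dual $s$-admissible; duality reduces this to the homogeneous Strichartz estimate at regularity $s$ paired with the one at regularity $1-s$ applied to the H\"older-conjugate pair $(\wt q', \wt r')$. One then upgrades this full-line bound to the retarded convolution $\int_0^t$ appearing in the Duhamel formula via the Christ--Kiselev lemma, whose hypothesis $q > \wt q$ is guaranteed by the paper's assumption $\wt q < 2 < q$. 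The main bookkeeping obstacle is verifying that the dual admissibility condition in \eqref{admis1}---namely $\frac{1}{\wt q}+\frac{2}{\wt r}-2=1-s$ and $\frac{2}{\wt q}+\frac{1}{\wt r}\geq\frac{5}{2}$---is exactly the statement that the H\"older-dual pair $(\wt q', \wt r')$ is $(1-s)$-admissible in the sense of \eqref{admis1}; a short arithmetic check confirms $\frac{1}{\wt q'}+\frac{2}{\wt r'}=s$ and $\frac{2}{\wt q'}+\frac{1}{\wt r'}\leq\frac{1}{2}$, which is precisely the pairing needed for $TT^*$.
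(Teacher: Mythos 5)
Your treatment of \eqref{Str2} via Minkowski's integral inequality applied to $S(t-\tau)(0, f(\tau))$ together with the homogeneous Strichartz and energy estimates matches the paper's argument exactly. For \eqref{Str1}, the paper simply cites \cite[Theorem 2.6]{TAO} for $\R^2$ together with finite speed of propagation to transfer to $\T^2$; you instead unpack that citation, deriving the inhomogeneous estimate from the homogeneous one by duality (the $TT^*$ step, pairing the estimate at regularity $s$ with the one at regularity $1-s$ for the H\"older-conjugate pair) and then applying the Christ--Kiselev lemma to pass from the full-line convolution to the retarded Duhamel integral, with the needed hypothesis $q > \wt q$ guaranteed by $\wt q < 2 < q$. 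Your arithmetic check that dual $s$-admissibility of $(\wt q, \wt r)$ is exactly $(1-s)$-admissibility of the conjugate pair $(\wt q', \wt r')$ is correct and is in fact recorded in the paper's footnote accompanying the definition of the admissibility conditions. The two routes are mathematically the same---Tao's proof of the non-endpoint inhomogeneous Strichartz estimates also goes through Christ--Kiselev---so the only genuine difference is that your version is self-contained where the paper defers to the reference. One small gap in exposition: you should note (as the paper's footnote on $\sin(t|0|)/|0| := t$ does) that the zero Fourier mode requires separate elementary handling, since $|\nb|^{-1}$ is not defined there; this is trivial on a bounded time interval but should be acknowledged when invoking the $TT^*$/duality machinery involving the homogeneous operator $|\nb|$.
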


The Strichartz estimates on $\R^d$ have been studied by many
mathematicians.  See Ginibre-Velo \cite{GV}, Lindblad-Sogge \cite{LS},
and Keel-Tao \cite{KeelTao}. 
 The first estimate \eqref{Str1} on $\T^2$ in Lemma \ref{LEM:Str} follows
from  Theorem 2.6 in 
\cite{TAO} for $\R^2$ 
and   the finite speed of propagation for the wave equation.
The first term on the left-hand side of  the second estimate \eqref{Str2} 
is estimated by   the energy estimate (2.29) in \cite{TAO}
and   the finite speed of propagation for the wave equation, 
while 
the second term on the left-hand side of the second estimate \eqref{Str2} 
is estimated by   Minkowski's integral inequality
and the homogeneous Strichartz estimate  in \eqref{Str1}: 
\begin{align*}
 \bigg\| \int_0^t \frac{\sin((t-\tau)|\nb|)}{|\nb|}f(\tau)d\tau  \bigg\|_{L^q_TL^r_x}  
&  \le   \int_0^T  \bigg\| \ind_{[0, t]}(\tau) \frac{\sin((t-\tau)|\nb|)}{|\nb|} f(\tau) \bigg\|_{L^q_t([0, T]; L^r_x)} d\tau
\\ 
& \les   \int_0^T \| f(\tau) \|_{H^{s-1}} d\tau. 
\end{align*}

In the remaining part of this subsection, we consider  
 the following  deterministic  wave equation with $k \geq 2$: 
\begin{align}
 \dt^2  u- \Delta u \pm |u|^{k-1} u =0. 
 \label{HK0}
\end{align}

\noi
Here, we allow $k \geq 2$ to take non-integer values.
In particular, we prove local well-posedness of \eqref{HK0}
in $\H^{s} (\T^2) = H^s(\T^2) \times H^{s-1}(\T^2)$ 
with  (i) $s \geq     s_\text{crit}$ if $k >3$ 
and (ii) $s> s_\text{crit}$ if $ 2 \leq k \leq 3$, 
where 
 $ s_\text{crit}$ is the  regularity defined in \eqref{scrit2}.

Suppose that we can find
an $s$-admissible pair $(q,r)$ and a dual $s$-admissible pair $(\wt q,\wt r)$ so that 
\begin{align}
q \ge k \wt q \qquad \text{and} \qquad  r\ge k \wt r. 
\label{HK1}
\end{align}

\noi
Then,  H\"older's inequality with the fact that $|\T^2| = 1$ yields
\begin{align}
  \big\| |u|^{k-1} u \big\|_{L^{\wt q}_T L^{\wt r}_x}
  \le T^{\frac{1}{\wt q}- \frac{k}q} \|  u \|_{L^q_T L^r_x}^k. 
\label{HK2}
\end{align}

\noi
Then, local well-posedness of \eqref{HK0} on 
a time interval $[0, T]$ for some $T = T(\phi_0, \phi_1)> 0$
follows
from  the Strichartz estimates (Lemma \ref{LEM:Str}), 
 \eqref{HK2}, and a standard contraction argument.
Note that we have a positive power of $T$ in \eqref{HK2}
when $q > k \wt q$. 
In this case, we can take $T = T(\| (\phi_0, \phi_1) \|_{\H^s}) > 0$.
Indeed, this is the case when $s$ is greater than the scaling critical regularity $s_\text{scaling}$.

\medskip

Fix  $0<s<1$.
Then, in view of \eqref{HK1},  we would like to  maximize 
\[   \min\Big\{ \frac{q}{\wt q}, \frac{r}{\wt r}  \Big\} \]

\noi
under the constraints of  Lemma \ref{LEM:pair}. 
While this is non-inspiring and can be easily done, 
the result gives important insights. 
In view of \eqref{admis1}, 
this is essentially\footnote{Here, we allow $\wt r =1$ that is not admissible for the Strichartz estimates.} equivalent 
to  the following maximization problem on 
$ J_s(r,\wt r)$ defined by 
\begin{align}
 J_s(r,\wt r) = 
 \frac{r}{\wt r} \min\left\{ 1, \frac{(3-s)\wt r-2 }{(1-s)r -2 } \right\} 
\label{J1}
\end{align}

\noi
over the set 
\begin{align}
 K(s)= \bigg[ 2,\frac{6}{(3-4s)_+}\bigg] \times \bigg[\max\Big\{1,\frac{6}{7-4s}\Big\}, \frac{2}{2-s}\bigg],   
\label{J2}
\end{align}

\noi
where $x_+ : = \max(x, 0)$ 
with the understanding that $\frac60 = \infty$.

\begin{lemma}\label{LEM:max} 
Given $0<s<1$, let 
$ J_s(r,\wt r)$ and 
$K(s)$ be as in \eqref{J1} and \eqref{J2}.
Then, the maximum of  $ J_s(r,\wt r)$  on $K(s)$ is given by 
\[  \sup_{(r, \wt r) \in K(s)}  J_s(r, \wt r)
= \begin{cases}
\rule[-3mm]{0pt}{0pt}
 \frac{3-s}{1-s}, & \text{if }  \frac12 \leq s < 1, \\
\rule[-3mm]{0pt}{0pt}
 \frac{7-4s}{3-4s}, & \text{if } \frac14 \le s \le \frac12, \\
\rule[-2.5mm]{0pt}{0pt}
 \frac{6}{3-4s}, & \text{if } 0 < s \le \frac14. 
\end{cases}
\]

\noi
Moreover, the supremum is indeed attained in each case:
\textup{(i)} when $0 < s \le \frac14$, it is attained at $(r, \wt r) = \big( \frac{6}{3-4s}, 1\big)$, 
\textup{(ii)} when $\frac14\le s \le \frac12$, it is attained at
$(r, \wt r) = \big( \frac{6}{3-4s}, \frac{6}{7-4s}\big)$, 
and
\textup{(iii)}
when $ \frac 12 \leq s < 1$,  it is attained in the set:
\begin{align}
  \frac{6}{7-4s}\cdot \frac{3-s}{1-s}
\le r \le 
\begin{cases}
\rule[-3mm]{0pt}{0pt}
\frac{6}{3-4s}, &  \text{if } \frac12 \le s \le 3-\sqrt{6}\sim 0.55,   \\
\frac{2}{2-s} \cdot \frac{3-s}{1-s}, & \text{if }  3-\sqrt{6}    \leq s < 1, 
\end{cases}
\quad \text{and}
\quad 
\wt r = \frac{1-s}{3-s} r.
\label{J3}
\end{align}

\end{lemma}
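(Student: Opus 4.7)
The plan is to split $K(s)$ by the diagonal line $\wt r = \frac{1-s}{3-s} r$, along which the two arguments of the minimum in \eqref{J1} coincide and $J_s \equiv \frac{3-s}{1-s}$. Writing $a = 3-s$ and $b = 1-s$ for brevity, above the diagonal ($a\wt r > br$) one has $J_s = r/\wt r$, while below it $J_s = \frac{r(a\wt r - 2)}{\wt r(br - 2)}$. Throughout $K(s)$ one has $a\wt r - 2 > 0$ (indeed $a\wt r - 2 \geq 1 - s > 0$ already at $\wt r = 1$), so a direct computation yields
\begin{equation*}
\partial_r J_s = \frac{-2(a\wt r - 2)}{\wt r(br-2)^2} \le 0, \qquad \partial_{\wt r} J_s = \frac{2r}{\wt r^2(br-2)}
\end{equation*}
in the region below the diagonal, while above it $J_s = r/\wt r$ is trivially decreasing in $\wt r$ and increasing in $r$. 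Negative values of $br - 2$ force $J_s < 0$ below the diagonal, so they are never competitive with values in the region above.

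Fixing $r$ and varying $\wt r$, these monotonicities imply that the supremum in $\wt r$ equals $\frac{3-s}{1-s}$ precisely when the diagonal crosses the admissible $\wt r$-interval $[\wt r_{\min}, \wt r_{\max}]$ with $\wt r_{\min} = \max\{1, \frac{6}{7-4s}\}$ and $\wt r_{\max} = \frac{2}{2-s}$; otherwise the whole vertical slice lies above the diagonal and the supremum is $r/\wt r_{\min}$. The decisive threshold is therefore whether $\frac{b}{a} r_{\max} \ge \wt r_{\min}$ where $r_{\max} = \frac{6}{(3-4s)_+}$. On $s \in [\tfrac14, \tfrac34)$ this reduces to $(3-s)(3-4s) \ge (1-s)(7-4s)$, which via the identity $(3-s)(3-4s) - (1-s)(7-4s) = 2(1-2s)$ is equivalent to $s \ge \tfrac12$; the extreme ranges $s \le \tfrac14$ and $s \ge \tfrac34$ are handled by the same comparison with the appropriate piecewise forms of $\wt r_{\min}$ and $r_{\max}$.

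Consequently, for $s \le \tfrac12$ the rectangle $K(s)$ sits entirely above the diagonal, $J_s = r/\wt r$ throughout, and the maximum is attained at the corner $(r_{\max}, \wt r_{\min})$: one gets $\frac{6}{3-4s}$ at $(\tfrac{6}{3-4s}, 1)$ when $s \le \tfrac14$, and $\frac{7-4s}{3-4s}$ at $(\tfrac{6}{3-4s}, \tfrac{6}{7-4s})$ when $\tfrac14 \le s \le \tfrac12$. For $s \ge \tfrac12$ the diagonal meets $K(s)$ and the maximum $\frac{3-s}{1-s}$ is attained along the entire segment of the diagonal contained in $K(s)$; its left endpoint is at $r = \frac{a}{b}\wt r_{\min} = \frac{6(3-s)}{(1-s)(7-4s)}$, and its right endpoint is $\min\{r_{\max}, \frac{a}{b}\wt r_{\max}\} = \min\{\frac{6}{3-4s}, \frac{2(3-s)}{(1-s)(2-s)}\}$. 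A short algebraic manipulation shows $\frac{6}{3-4s} \le \frac{2(3-s)}{(1-s)(2-s)}$ iff $s^2 - 6s + 3 \ge 0$ iff $s \le 3-\sqrt{6}$, producing the dichotomy of \eqref{J3}. The only real obstacle is bookkeeping: $\wt r_{\min}$ changes form at $s = \tfrac14$, the upper bound on $r$ becomes unbounded at $s = \tfrac34$, and the right endpoint of the optimal segment switches at $s = 3 - \sqrt{6}$. None of these introduces genuine analytic difficulty beyond the elementary monotonicity argument above.
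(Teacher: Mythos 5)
Your proposal is correct and follows essentially the same route as the paper: both split the optimization along the line $\wt r = \frac{1-s}{3-s}r$ where the two branches of the minimum in $J_s$ coincide, and then use monotonicity of $J_s$ in each region (which the paper states informally as ``minimizing $r$ and maximizing $\wt r$'' while you make it explicit via the partial derivatives) to locate the maximum either on that line, when it meets $K(s)$, or at the corner $(r_{\max}, \wt r_{\min})$ otherwise. One sign typo to fix: the threshold $\frac{1-s}{3-s}\,r_{\max} \ge \wt r_{\min}$ reduces to $(1-s)(7-4s) \ge (3-s)(3-4s)$, not the reverse inequality you wrote; by your identity $(3-s)(3-4s) - (1-s)(7-4s) = 2(1-2s)$ this is indeed equivalent to $s \ge \tfrac12$, so the conclusion you draw is the correct one.
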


 \begin{proof} 
From \eqref{J1}, we see that 
the maximum of $J_s(r, \wt r)$ on $K(s)$ is given by $\max\{J_1(s),J_2(s) \}$, 
where 
\[ J_1(s)= \max \bigg\{ \frac{r}{\wt r} : \frac{r}{\wt r} \le \frac{3-s}{1-s}, \, (r,\wt r) \in K(s) \bigg\} \] 
and 
\[J_2(s) = \max \bigg\{ \frac{3-s-\frac2{\wt r}}{1-s-\frac2{r}} :   
 \frac{r}{\wt r} \ge \frac{3-s}{1-s}, \, (r,\wt r ) \in K(s) \bigg\}. \]

From \eqref{admis2} and \eqref{admis3}, we have
\[ \sup\bigg\{ \frac{r}{\wt r}: (r,\wt r) \in K(s) \bigg\} 
=  \begin{cases}
\rule[-2.5mm]{0pt}{0pt}
 \frac{6}{3-4s},        & \text{ if } 0<s \le \frac14,  \\
\rule[-2mm]{0pt}{0pt}
\frac{7-4s}{3-4s},    & \text{ if } \frac14 \le s \le \frac34,  \\
\infty,  & \text{ if } s \ge \frac34
\end{cases}
\]

\noi
and  
\[ \min\bigg\{ \frac{r}{\wt r}: (r,\wt r) \in K(s)\bigg\} =  2-s . \]

\noi
Note that we have 
$ \frac{6}{3-4s} \le \frac{3-s}{1-s} $ for $ s \le \frac14$ and 
$\frac{7-4s}{3-4s} \le \frac{3-s}{1-s}$ for $ s\le \frac12$.
Hence,  for $0<  s\le \frac12 $, we have
 \[  \sup_{(r, \wt r) \in K(s)}  J_s(r, \wt r)
=   \max\{J_1(s), J_2(s)\} = \sup\bigg\{ \frac{r}{\wt r} : (r,\wt r) \in K(s)\bigg\}.   \]

Next, we consider the case  $\frac12 < s < 1$. 
On the one hand, we have  $J_1(s) \le \frac{3-s}{1-s}$.
On the other hand, by minimizing $r$ and maximizing $\wt r$ under   $\frac{r}{\wt r} \ge \frac{3-s}{1-s}$,
we obtain
\begin{align*}
J_2(s)
& =  \max\left\{ \frac{3-s-\frac2{\wt r}}{1-s-\frac2r}:\frac{r}{\wt r} = \frac{3-s}{1-s} , \,  (r,\wt r) \in K(s) \right\}
\\ 
& =  \frac{3-s}{1-s}. 
\end{align*}

\noi
It is easy to check that  this maximum is attained 
in the set described in \eqref{J3}.
\end{proof}

As a result,  we can prove local well-posedness of the deterministic NLW
\eqref{D1} at the regularities stated in Remark \ref{REM:NLW}.
Indeed, it suffices to note that 
Lemma \ref{LEM:max} guarantees the existence of 
 an $s$-admissible pair $(q, r)$ and a dual $s$-admissible pair $(\wt q, \wt r)$
 satisfying \eqref{HK1},
 provided that 
  (i) $s \geq     s_\text{crit}$ if $k >3$ 
and (ii) 
and $s> s_\text{crit}$ if $ 2 \leq k \leq 3$, 
where $s_\text{crit}$ is as in \eqref{scrit2}.
Note that when $ 2 \leq k \leq 3$, 
 the endpoint $s = s_\text{crit}$ is excluded 
 since the maximum in Lemma \ref{LEM:max} is attained
 at $\wt r = 1$, which is not allowed for  the dual $s$-admissibility.
 Then, the rest of the proof of the local well-posedness follows
 from  the Strichartz estimates (Lemma~\ref{LEM:Str}), \eqref{HK2}, and 
a standard fixed point argument.
See also the discussion in Subsection~\ref{SUBSEC:3.3}.

\subsection{Estimating a product}

In this subsection, we state several product estimates
for periodic functions on $\T^d$.
First, recall the following fractional Leibniz rule
for functions on $\R^d$;
let 
 $1<p_j,q_j,r < \infty$, $\frac1{p_j} + \frac1{q_j}= \frac1r$, $j = 1, 2$. 
 Then, we have  
\begin{equation} 
\big\| |\nb|^s (fg) \big\|_{L^r(\R^d)} 
\les  \big\| f \big\|_{L^{p_1}(\R^d)} \big\| |\nb|^s g \big\|_{L^{q_1}(\R^d)} 
+ \big\| |\nb|^s f \big\|_{L^{p_2}(\R^d)}  \big\| g \big\|_{L^{q_2}(\R^d)}. 
\label{bilinear1}
\end{equation}  

\noi
This estimate is an immediate consequence of 
 the Coifman-Meyer theorem; see \cite{CM} and the inequality (1.1) in \cite{MS}.    
We use \eqref{bilinear1} to prove  the following product estimates
for functions on $\T^d$.

\begin{lemma}\label{LEM:bilin}
 Let $0\le s \le 1$.

\smallskip

\noi
\textup{(i)} Suppose that 
 $1<p_j,q_j,r < \infty$, $\frac1{p_j} + \frac1{q_j}= \frac1r$, $j = 1, 2$. 
 Then, we have  
\begin{equation}  
\| \jb{\nb}^s (fg) \|_{L^r(\T^d)} 
\les \Big( \| f \|_{L^{p_1}(\T^d)} 
\| \jb{\nb}^s g \|_{L^{q_1}(\T^d)} + \| \jb{\nb}^s f \|_{L^{p_2}(\T^d)} 
\|  g \|_{L^{q_2}(\T^d)}\Big).
\label{bilinear5}
\end{equation}

\smallskip

\noi
\textup{(ii)} 
Suppose that  
 $1<p,q,r < \infty$ satisfy the scaling condition:
$\frac1p+\frac1q=\frac1r + \frac{s}d $.
Then, we have
\begin{align}
\big\| \jb{\nb}^{-s} (fg) \big\|_{L^r(\T^d)} \les \big\| \jb{\nb}^{-s} f \big\|_{L^p(\T^d) } 
\big\| \jb{\nb}^s g \big\|_{L^q(\T^d)}.  
\label{bilinear2}
\end{align}

\end{lemma}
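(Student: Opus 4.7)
The plan is to handle the two parts separately, deducing part (ii) from part (i) by duality and Sobolev embedding.

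For part (i), the idea is to reduce to the Euclidean fractional Leibniz rule \eqref{bilinear1}. The key algebraic input is the pointwise inequality
\[
\jb{\xi+\eta}^s \le C\bigl(\jb{\xi}^s + \jb{\eta}^s\bigr), \qquad 0\le s\le 1,
\]
which allows a smooth splitting of the bilinear symbol $m(\xi,\eta)=\jb{\xi+\eta}^s$ as
\[
m(\xi,\eta) = a(\xi,\eta)\,\jb{\xi}^s + b(\xi,\eta)\,\jb{\eta}^s,
\]
where $a$ is smoothly supported in the cone $\{|\xi|\geq |\eta|/2\}$ and $b$ in $\{|\eta|\geq |\xi|/2\}$. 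By construction $a,b$ are bounded and satisfy the Coifman--Meyer differential estimates $|\pa_\xi^\al \pa_\eta^\be a(\xi,\eta)|\les (1+|\xi|+|\eta|)^{-|\al|-|\be|}$. The bilinear Coifman--Meyer multiplier theorem on $\T^d$, obtained from its Euclidean counterpart by standard transference of Mikhlin-type symbols restricted to $\Z^d\times\Z^d$, then yields that the bilinear operators with symbols $a$ and $b$ are bounded from $L^{p_2}\times L^{q_2}\to L^r$ and $L^{p_1}\times L^{q_1}\to L^r$ respectively. Applied to $\jb{\nb}^s(fg)$ this gives the two terms on the right-hand side of \eqref{bilinear5}. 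The purely low-frequency contribution ($|\xi|,|\eta|\les 1$) is harmless, since $\jb{\nb}^s$ is bounded on $L^r$ when restricted to any fixed bounded set of frequencies and reduces to H\"older's inequality.

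For part (ii), I argue by duality. Writing
\[
\|\jb{\nb}^{-s}(fg)\|_{L^r} = \sup_{\|h\|_{L^{r'}}\le 1}|\langle fg,\jb{\nb}^{-s}h\rangle| = \sup_{\|\phi\|_{W^{s,r'}}\le 1}|\langle f,g\phi\rangle|
\]
with $\phi := \jb{\nb}^{-s}h$, and using the $W^{-s,p}$--$W^{s,p'}$ duality, the estimate \eqref{bilinear2} reduces to the Leibniz-type bound
\[
\|g\phi\|_{W^{s,p'}} \les \|g\|_{W^{s,q}}\,\|\phi\|_{W^{s,r'}}.
\]
A short computation shows the scaling hypothesis $\tfrac1p+\tfrac1q=\tfrac1r+\tfrac{s}{d}$ is equivalent to $\tfrac1q+\tfrac1{r'}=\tfrac1{p'}+\tfrac{s}{d}$, which is precisely the scaling needed to apply part (i) to $g\phi$. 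Choosing in \eqref{bilinear5} one H\"older exponent equal to $q$ (respectively $r'$) and the other equal to the Sobolev exponent $q^*$ (respectively $(r')^*$) defined by $\tfrac{1}{q^*}=\tfrac1q-\tfrac{s}{d}$ and $\tfrac{1}{(r')^*}=\tfrac1{r'}-\tfrac{s}{d}$, and then applying the Sobolev embeddings $W^{s,q}\embeds L^{q^*}$ and $W^{s,r'}\embeds L^{(r')^*}$, closes the estimate.

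The main obstacle is the symbol decomposition in part (i): exhibiting the splitting with symbols $a,b$ genuinely satisfying the Coifman--Meyer conditions (not merely their sizes, but all derivative estimates) and verifying the transference of the bilinear multiplier bounds from $\R^d$ to $\T^d$ with the inhomogeneous weight $\jb{\cdot}^s$ in place of $|\cdot|^s$. Once part (i) is in hand, part (ii) is routine: the scaling exponents are forced to match by the hypothesis, and Sobolev embedding handles the remaining discrepancies.
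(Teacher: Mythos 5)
Your proposal matches the paper's argument: part (i) is obtained by transferring the Euclidean Coifman--Meyer/fractional Leibniz estimate \eqref{bilinear1} to $\T^d$ (the paper cites the transference principle of Fan--Sato), and part (ii) is deduced from part (i) by exactly the duality-plus-Sobolev scheme you describe, with the same exponent bookkeeping. Your more explicit discussion of the bilinear symbol splitting is simply a fleshing-out of the step the paper delegates to the Coifman--Meyer theorem and transference.
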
 

\begin{proof} 
In view of the transference principle \cite[Theorem 3]{FS}, 
the first estimate \eqref{bilinear5} follows from the Coifman-Meyer theorem 
for functions on $\R^d$
and \eqref{bilinear1}.
 The second estimate \eqref{bilinear2} follows 
 from  duality, the first estimate \eqref{bilinear5}, and Sobolev's inequality:
\begin{align*}
  \big\| \jb{\nb}^{-s} (fg) \big\|_{L^r} 
  & \le  \sup_{\|\jb{\nb}^s h \|_{L^{r'}} =1}  \bigg|\int fgh \, dx \bigg|\\
 & \le \big\| \jb{\nb}^{-s} f \big\|_{L^p}   \sup_{\| \jb{\nb}^s h \|_{L^{r'}} =1} 
 \big\| \jb{\nb}^{s} (gh) \big\|_{L^{p'}} \\
 &  \les \big\| \jb{\nb}^{-s} f \big\|_{L^p}  
 \sup_{\| \jb{\nb}^s h \|_{L^{r'}} = 1} \Big( \| g \|_{L^{\wt  q}} \big\| \jb{\nb}^s h \big\|_{L^{r'}} 
+ \big\|\jb{\nb}^s g \big\|_{L^q} \| h \|_{L^{\wt r'} } \Big) \\
 &  \lesssim   \big\| \jb{\nb}^{-s} f \big\|_{L^p} \big\| \jb{\nb}^s g \big\|_{L^q}   , 
\end{align*}

\noi
where the exponents satisfy the H\"older relations: 
\begin{align}
 \frac1q + \frac1{\wt r'} = \frac1{\wt q} + \frac1{r'}=\frac{1}{p'} 
\label{bilinear3}
\end{align}

\noi
and 
the exponents satisfy the Sobolev relations: 
\begin{align}
 \frac1{\wt q} = \frac1q -\frac{s}{d}  
  \qquad \text{and}\qquad
  \frac1{\wt r'} = \frac1{r'} - \frac{s}d.  
\label{bilinear4}
\end{align}

\noi
Altogether, \eqref{bilinear3} and \eqref{bilinear4} yield the scaling condition. 
\end{proof}

\subsection{Local well-posedness of SNLW}
\label{SUBSEC:3.3}

In this subsection, we present the proof of Theorem \ref{THM:1}.
Given an integer $k \geq 2$ and $(\phi_0, \phi_1) \in \H^s(\T^2)$, define a map $\G$ by 
\begin{align}
v \mapsto \G(v)(t)
& \stackrel{\text{def}}{=} 
S(t) (\phi_0, \phi_1)
 \mp \int_{0}^t  \frac{\sin ((t-\tau)|\nabla|)}{|\nabla|}
    F_\Psi(v(\tau)) d\tau \notag \\
& = S(t) (\phi_0, \phi_1)
 \mp  \sum_{\ell=0}^k {k\choose \ell} \int_{0}^t  \frac{\sin ((t-\tau)|\nabla|)}{|\nabla|}
 :\! \Psi^\l (\tau)  \!: v^{k-\l}(\tau ) d\tau.
\label{SNLW10}
\end{align}

\noi 
Let  $s $ be as in Theorem \ref{THM:1}. 
More precisely we assume that  
(i) $ s\geq s_\text{crit} $ if $k \ge 4$, 
(ii)~$ s > \frac14$ if $k=3$, and 
(iii) $s>0$ if $k=2$. 
 In the following, we only consider the case $s < 1$.

In view of Lemma \ref{LEM:max} and \eqref{scrit}, 
we can choose an $s$-admissible pair $(q, r)$ and a dual $s$-admissible pair $(\wt q, \wt r)$
such that 
\begin{align*}
\min\Big\{  \frac{q}{\wt q} ,\frac{r}{\wt r} \Big\} \ge k 
%\label{HK3}
\end{align*}

\noi
with a strict inequality if $k=2$ or $3$.

We define $X^s(T)$ as the intersection of the energy space at level $s$ and the Strichartz space
\begin{align*}
 X^s(T) = C([0,T];H^s(\T^2))\cap C^1([0,T]; H^{s-1}(\T^2))\cap L^q([0, T]; L^r(\T^2)). 
% \label{PF0}
\end{align*}

\begin{proposition}\label{PROP:nonlin} 
Given an integer $k \geq 1$, let $s$, $(q, r)$, and $(\wt q, \wt r)$ be as above.
Then,  there exist sufficiently small $\eps >0$ and $\theta > 0$ such that 
\begin{align} 
\| \G(v) \|_{X^s(T)} 
& \les
\|(\phi_0, \phi_1)\|_{\H^s}
+ \| :\!\Psi^k\!: \|_{L^1_TH^{s-1}_x} \notag\\
& \hphantom{XXX}+   T^\theta \sum_{\l=1}^{k-1} \|\jb{\nb}^{-\eps} :\!\Psi^\l\!: \|_{L^\infty_{T, x}} 
\| v \|_{X^s(T)}^{k-\l}  
 +  T^{\frac{1}{\wt q}-\frac kq} 
\| v \|_{X^s(T)}^k   
\label{J3a}
\end{align}
and 
\begin{align}
\| \G(v_1)-\G(v_2)  \|_{X^s(T)} & \les  
T^\theta \sum_{\l=1}^k \|\jb{\nb}^{-\eps} :\!\Psi^\l\!: \|_{L^\infty_{T, x}} \notag\\
& \hphantom{XXXX}
\times
\Big( \| v_1 \|_{X^s(T)}  + \| v_2 \|_{X^s(T)}\Big)^{k-\l-1}  
\| v_1 - v_2 \|_{X^s(T)}   \notag\\ 
  & \hphantom{XXXX}
  +  T^{\frac{1}{\wt q}-\frac k q} 
\Big( \| v_1 \|_{X^s(T)}+ \| v_2 \|_{X^s(T)}\Big)^{k-1} \| v_1-v_2 \|_{X^s(T)}
\label{J3b}
\end{align}

\noi
for any $T > 0$.

\end{proposition}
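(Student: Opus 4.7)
The plan is to apply the Strichartz estimates of Lemma~\ref{LEM:Str} to the Duhamel representation of $\G(v)$ in \eqref{SNLW10}. The homogeneous propagator yields $\|(\phi_0,\phi_1)\|_{\H^s}$, while I would treat the nonlinear sum
$F_\Psi(v)=\sum_{\ell=0}^k\binom{k}{\ell}\!:\!\Psi^\ell\!:\!v^{k-\ell}$
in three separate regimes: the purely deterministic term $\ell=0$, the purely stochastic term $\ell=k$, and the mixed terms $1\le\ell\le k-1$. For $\ell=0$ I apply \eqref{Str1} and use H\"older in space (exploiting $r\ge k\wt r$ and $|\T^2|=1$) followed by H\"older in time (exploiting $q\ge k\wt q$) to obtain
\begin{equation*}
\|v^k\|_{L^{\wt q}_T L^{\wt r}_x}\le T^{1/\wt q-k/q}\,\|v\|_{L^q_T L^r_x}^k,
\end{equation*}
accounting for the last term of \eqref{J3a}. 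For $\ell=k$ the alternative Strichartz bound \eqref{Str2} extracts the contribution $\|\!:\!\Psi^k\!:\!\|_{L^1_T H^{s-1}_x}$ directly.

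For the mixed terms $1\le\ell\le k-1$, I again invoke \eqref{Str2}, so the task reduces to estimating $\|\!:\!\Psi^\ell\!:\!v^{k-\ell}\|_{L^1_T H^{s-1}_x}$. Since $s<1$ I pick $\eps\in(0,1-s)$ so that $H^{-\eps}(\T^2)\hookrightarrow H^{s-1}(\T^2)$, and then apply the negative-index product estimate~\eqref{bilinear2} with $p=\infty$:
\begin{equation*}
\|\!:\!\Psi^\ell\!:\!v^{k-\ell}\|_{H^{-\eps}_x}
\lesssim \|\jb{\nb}^{-\eps}\!:\!\Psi^\ell\!:\!\|_{L^\infty_x}\,\|\jb{\nb}^\eps v^{k-\ell}\|_{L^{p_0}_x},
\end{equation*}
with $p_0\in(1,2)$ fixed by the scaling $1/p_0=1/2+\eps/2$. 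Iterating the fractional Leibniz rule~\eqref{bilinear5} then controls $\|\jb{\nb}^\eps v^{k-\ell}\|_{L^{p_0}_x}$ by a sum of products of $\|v\|_{L^\rho_x}$-factors (handled by Sobolev's embedding from $H^s$ and by interpolation with the Strichartz $L^r_x$-norm) and one factor $\|\jb{\nb}^\eps v\|_{L^{\rho'}_x}\lesssim\|v\|_{H^s_x}$ (for $\eps\le s$). Integrating in $\tau$ and applying H\"older in time with the slack $q>(k-\ell)\wt q$ yields a factor $T^\theta$, producing the mixed-term bound in \eqref{J3a}. The Lipschitz bound~\eqref{J3b} follows from the identity
$v_1^{k-\ell}-v_2^{k-\ell}=(v_1-v_2)\sum_{j=0}^{k-\ell-1}v_1^j v_2^{k-\ell-1-j}$
and re-running the same three-case analysis on each summand.

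The main obstacle will be the mixed regime: the parameters $\eps>0$, $p_0$, and the temporal exponents must be chosen simultaneously so that (i) the negative regularity of $\!:\!\Psi^\ell\!:$ is exactly compensated by the positive derivative $\jb{\nb}^\eps$ on $v^{k-\ell}$ under the scaling of \eqref{bilinear2}, (ii) the resulting $L^{p_0}$-norm of $v^{k-\ell}$ is controlled solely by the components of $\|v\|_{X^s(T)}$, and (iii) a strictly positive power of $T$ survives the time integration. The freedom afforded by Lemma~\ref{LEM:max} in the choice of $(q,r)$ and $(\wt q,\wt r)$ subject to $q\ge k\wt q$ and $r\ge k\wt r$---strict when $k\in\{2,3\}$---provides precisely the slack needed to meet all three constraints at the regularity threshold of Theorem~\ref{THM:1}.
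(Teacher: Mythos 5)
Your three-way split of the nonlinearity into $\ell=0$, $\ell=k$, and $1\le\ell\le k-1$ mirrors the paper's, and your handling of the two extreme cases is correct: \eqref{Str1} with H\"older for $\ell=0$, and \eqref{Str2} for $\ell=k$. The gap is in the mixed terms.

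Placing $:\!\Psi^\ell\!:v^{k-\ell}$ in $L^1_T H^{s-1}_x$ \emph{alone}, via $H^{-\eps}\hookrightarrow H^{s-1}$ and \eqref{bilinear2} with target Lebesgue exponent $2$, forces $\jb{\nb}^\eps v^{k-\ell}$ to land in $L^{p_0}_x$ with $p_0=\tfrac{2}{1+\eps}\to 2$, and this spatial exponent is too large near the critical regularity. Concretely, take $k=4$, $s=s_{\text{crit}}=\tfrac{5}{12}$, $\ell=1$. Lemma~\ref{LEM:max} forces $r\le\tfrac{6}{3-4s}=\tfrac92$, and the best spatial Lebesgue exponent your $X^s(T)$-norm gives for each factor of $v$ (or for $\jb{\nb}^\eps v$, after interpolating $L^q_TL^r_x$ with $L^\infty_TH^s_x$ as in \eqref{J4}) tends to $r=\tfrac92$ as $\eps\to0$. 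Hence the Leibniz decomposition
\[
\|\jb{\nb}^\eps v^3\|_{L^{p_0}_x}\lesssim \|v\|_{L^a_x}^2\,\|\jb{\nb}^\eps v\|_{L^b_x},
\qquad \frac2a+\frac1b=\frac1{p_0},
\]
can only reach $\frac2a+\frac1b\ge\frac{2}{9/2}+\frac{1}{9/2}=\frac23$, while the scaling of \eqref{bilinear2} requires $\frac1{p_0}\to\frac12<\frac23$. There is no way to satisfy the constraint, and the estimate \eqref{multilinear} does not close. (The same obstruction already appears for $k=3$: your constraints force $s\ge\tfrac38$, leaving a gap $s\in(\tfrac14,\tfrac38)$ inside the range of Theorem~\ref{THM:1}.)

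The missing idea is that Lemma~\ref{LEM:Str} actually makes the inhomogeneity available in the \emph{sum space} $L^1_TH^{s-1}_x + L^{\wt q}_TL^{\wt r}_x$, and the paper exploits this through the duality/interpolation step \eqref{J5}: since $(L^1_TH^{s-1}_x + L^{\wt q}_TL^{\wt r}_x)^* = L^\infty_TH^{1-s}_x\cap L^{\wt q'}_TL^{\wt r'}_x$, interpolating the test function between the two dual spaces gives
\[
\|f\|_{L^1_TH^{s-1}_x + L^{\wt q}_TL^{\wt r}_x}\lesssim\|\jb{\nb}^{-\eps}f\|_{L^{\wt q_2}_TL^{\wt r_2}_x}
\]
with $\wt r_2\to\wt r<2$. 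In the example above $\wt r=\tfrac98$, so $\tfrac{1}{\wt r_2}\to\tfrac89$, leaving ample room in the subsequent Leibniz/H\"older step (see \eqref{J7}--\eqref{J9}). Your $L^2_x$-target throws away exactly this slack; it cannot be recovered by re-choosing $(q,r)$, $(\wt q,\wt r)$ within Lemma~\ref{LEM:max}.
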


\begin{proof} 
We only prove the first estimate \eqref{J3a} since the second estimate
follows in a similar manner.
As in Subsection \ref{SUBSEC:3.1}, 
we can estimate the term with $\l=0$ in \eqref{SNLW10} 
by Lemma \ref{LEM:Str} and \eqref{HK2}.
On the other hand, we can use \eqref{Str2} in Lemma \ref{LEM:Str}
to estimate 
the first term on the right-hand side of \eqref{J3a}
and 
the term with $\l=k$ in \eqref{SNLW10}.
Hence, it remains to prove
\begin{equation} 
\label{multilinear}  
\bigg\| \int_0^t \frac{\sin ((t-\tau )|\nb|)}{|\nb|} :\!\Psi^\l\!: \prod_{j = 1}^{k-\l}v_j  d\tau \bigg\|_{X^s(T)} 
\les  T^\theta \|\jb{\nb}^{-\eps} :\!\Psi^\l\!: \Vert_{L^\infty_{T, x}} \prod_{j=1}^{k-\l}  \| v_j \|_{X^s(T)}
\end{equation} 

\noi
for $1\leq \l\leq k-1$.
To simplify the notation,  we only consider the case  $v_j=v$ in the following. 
The full estimate can be recovered by polarization or, what may be easier, by checking that the proof applies to a general product.

By interpolation between the Strichartz part of the norm and the energy part of the $X^s(T)$-norm,
we have
\begin{align}
 \|\jb{\nb}^\eps v \|_{L^{q_1}_T L^{r_1}_x} 
\le \|v \|_{L^{q}_TL^r_x}^{1-\frac{\eps}s}   \| v \|_{L^\infty_T H^s_x}^{\frac{\eps}s}  
\leq \|v \|_{X^s(T)}
\label{J4}
\end{align}

\noi
for $0 <  \eps < s$, where
\[ \frac{1}{q_1} = \frac{1-\eps/s}{q} + \frac{\eps/s}\infty \qquad 
\text{and}\qquad \frac{1}{r_1} = \frac{1-\eps/s}{r} + \frac{\eps/s}2.\]

Similarly, 
by duality 
with $(L^1_T H^{s-1}_x+L^{\wt q}_T L^{\wt r}_x )^*
= L^\infty_T H^{1-s}_x \cap L^{\wt q'}_T L^{\wt r'}_x $
and interpolation, we have
\begin{align}
\| f \|_{L^1_T H^{s-1}_x+L^{\wt q}_T L^{\wt r}_x }
& = 
 \inf_{f = f_1+f_2} \Big(  
\| f_1 \|_{L^1_T H^{s-1}_x} +\| f_2 \|_{L^{\wt q}_T L^{\wt r}_x }\Big) \notag\\
& = \sup_{\|g\|_{L^\infty_T H^{1-s}_x \cap L^{\wt q'}_T L^{\wt r'}_x }\leq 1}
\bigg|\int_0^T \int_{\T^2} 
f g dx dt\bigg|\notag\\
& \le \sup_{\|g\|_{L^\infty_T H^{1-s}_x \cap L^{\wt q'}_T L^{\wt r'}_x }\leq 1}
 \| \jb{\nb}^{\eps} g \|_{L^{\wt q_2'}_T  L^{\wt r_2'}_x} 
 \| \jb{\nb}^{-\eps} f \|_{L^{\wt q_2}_T  L^{\wt r_2}_x} \notag\\
 & \les  \| \jb{\nb}^{-\eps} f \|_{L^{\wt q_2}_T  L^{\wt r_2}_x} 
\label{J5}
\end{align}

\noi
for $0 <  \eps < 1- s$, where
\[ \frac1{\wt q_2} = \frac{\eps/(1-s)}{1} + \frac{1-\eps/(1-s)}{\wt q}
 \qquad \text{and}\qquad \frac1{\wt r_2} = \frac{\eps/(1-s)}{2} + \frac{1-\eps/(1-s)}{\wt r}   .  \]

We also claim  that the following estimate holds:
\begin{equation}  
\big\| \jb{\nb}^{-\eps}  :\!\Psi^\l\!: v^{k-\l} \big\|_{L^{\wt q_2}_T L^{\wt r_2}_x }
 \les T^\theta \| \jb{\nb}^{-\eps} :\!\Psi^\l\!: \Vert_{L^\infty_{T, x}} 
 \|\jb{\nb}^{\eps} v \|^{k-\l}_{L^{q_1}_T L^{r_1}} .
\label{J6}
\end{equation}

\noi
for sufficiently small $\eps > 0$
and $\theta > 0$.
Fix $t \in [0, T]$.  By applying Lemma \ref{LEM:bilin} (ii), we have
\begin{align}
\big\| \jb{\nb}^{-\eps}  :\!\Psi^\l(t)\!: v^{k-\l}(t) \big\|_{ L^{\wt r_2}_x }
& \les \|\jb{\nb}^{-\eps} :\!\Psi^\l(t)\!: \|_{L^\frac{2}{\eps}_x} 
\| \jb{\nb}^{\eps} v^{k-\l} (t)\|_{L^{\wt r_2 }_x} \notag\\
& \le \|\jb{\nb}^{-\eps} :\!\Psi^\l(t)\!: \|_{L^\infty_x} 
\| \jb{\nb}^{\eps} v^{k-\l} (t)\|_{L^{\wt r_2 }_x} .
\label{J7}
\end{align}

\noi
Then, by applying Lemma \ref{LEM:bilin} (i), we have
\begin{align} 
\|\jb{\nb}^{\eps} v^{k-\l}(t)  \|_{L^{\wt r_2}_x}
 \les   \| v (t) \|_{L^{(k-\l)\wt r_2}_x}^{k-\l-1} \|\jb{\nb}^{\eps} v (t) \|_{L^{(k-\l) \wt r_2}_x}
 \les   \|\jb{\nb}^{\eps} v(t) \|^{k-\l}_{L^{(k-\l) \wt r_2}_x}.
\label{J8}
\end{align}

\noi
Note that we can choose $\eps > 0$ sufficiently small such that 
\begin{align}
  (k-1) \wt q_2 <  q_1 \qquad \text{and} \qquad (k-1) \wt r_2 \le r_1 
  \label{J9}
\end{align}

\noi
This can be achieved in view of \eqref{HK1} and 
\[ q_1 \to q, \quad  r_1 \to r, \quad \wt q_2 \to \wt q ,\quad \text{and} \quad  \wt r_2 \to \wt r \] 

\noi
as $\eps \to 0$.
Hence, \eqref{J6} follows from \eqref{J7}, \eqref{J8}, and \eqref{J9}. 
Note that the strict inequality in \eqref{J9} is used to gain a factor $T^\theta$.

Putting 
 Lemma \ref{LEM:Str},  \eqref{J4}, \eqref{J5}, and \eqref{J6} together, 
we obtain the desired estimate~\eqref{multilinear}.
\end{proof}

Proposition \ref{PROP:nonlin}
with a standard fixed point argument
immediately yields Theorem \ref{THM:1}
in the subcritical case, i.e.~$s > s_\text{crit}$.
In this case, we have $ q> k \wt q$, which  provides a positive power of $T$
on  the last terms of \eqref{J3a} and \eqref{J3b}. 
In particular, this implies that almost sure local well-posedness
holds on $[-T, T]$, where  $T = T_\o(\|\phi_0, \phi_1)\|_{\H^s})> 0$.
Note that 
the mild formulation \eqref{SNLW10}
with  the continuity of the linear propagator $S(t)$,
(the proof of) Proposition \ref{PROP:nonlin},
and Proposition \ref{PROP:sconv}
shows that the solution $v$ lies in $C([0, T]; H^s(\T^2))$.

On the other hand, in the critical case:
$s = s_\text{crit}$ (with  $k \geq 4$), 
 we have $ q= k \wt q$.
 Thus,   the last terms of \eqref{J3a} and \eqref{J3b}
 do not provide any power of $T$.
In this case, a direct application of Proposition \ref{PROP:nonlin}
 would yield only small data local well-posedness
 and thus we need to slightly  modify the argument .

Let $\eps > 0$ be as in Proposition \ref{PROP:nonlin}.
Then,  in view of \eqref{HK2} and \eqref{J4}, 
we set $Y^s(T)$ by 
\[ \| v\|_{Y^s(T)} = \max \big(\|v \|_{L^{q}_TL^r_x}^{1-\frac{\eps}s}   \| v \|_{L^\infty_T H^s_x}^{\frac{\eps}s}  , \,
\|v \|_{L^{q}_TL^r_x}\big).\]

\noi
Then, it follows from the proof of Proposition \ref{PROP:nonlin} that 
\begin{align} 
\| \G(v) \|_{Y^s(T)} 
& \les
\|S(t)(\phi_0, \phi_1)\|_{Y^s(T)}
+ \| :\!\Psi^k\!: \|_{L^1_TH^{s-1}_x} \notag\\
& \hphantom{XXX}+   T^\theta \sum_{\l=1}^{k-1} \|\jb{\nb}^{-\eps} :\!\Psi^\l\!: \|_{L^\infty_{T, x}} 
\| v \|_{Y^s(T)}^{k-\l}  
 +  T^{\frac{1}{\wt q}-\frac kq} 
\| v \|_{Y^s(T)}^k .  
\label{J10}
\end{align}

\noi
The difference estimate \eqref{J3b} with $X^s(T)$ replaced by $Y^s(T)$ also holds.

By the monotone convergence theorem, 
we have $\| v\|_{Y^s(T)}\to 0$  as $T \to 0$.
Hence, together with Proposition \ref{PROP:sconv}, 
we can choose $T = T_\o(\phi_0, \phi_1)> 0$ sufficiently small such that 
$\|S(t) (\phi_0, \phi_1)\|_{L^q_T  W^{\eps, r_1}_x}
+\| \!:\!\Psi^k \!: \!\|_{L^1_TH^{\s-1}_x} 
\leq \eta  \ll1 $ almost surely, allowing us to show that 
$\G$ is a contraction on the ball of radius $\eta$ in $Y^s(T)$.
Lastly, noting that \eqref{J10} holds even if we replace the $Y^s(T)$-norm on the left-hand side
by the $X^s(T)$-norm, 
we conclude  that $v \in X^s(T)$.

\section{Weak universality for semilinear wave equations with random
perturbation}
\label{SEC:4}

In this section, we present an application of the local well-posedness argument presented in Section \ref{SEC:3}.
In particular, 
we establish weak universality of the Wick ordered SNLW
in the following sense.
Given small $\eps > 0$, we consider the following  SNLW 
on $(\eps^{- 1} \T)^2$ 
with a smooth noise $\eta^\eps$:
\begin{align}
\begin{cases}
 \dt^2 w_\eps - \Dl w_\eps = f (w_\eps) + a (\eps, t) w_\eps + \delta   (\eps) \eta^\eps \\
 (w_\eps, \dt w_\eps) |_{t = 0}=  (0, 0),
\end{cases}
\qquad 
(x, t)\in (\eps^{- 1} \T)^2 \times \R_+, 
\label{SNLW11}
\end{align}

\noi
where 
 $f : \mathbb{R} \rightarrow
\mathbb{R}$ is a  smooth odd function which we take bounded with a sufficient
number of bounded derivatives\footnote{For example, in proving  Theorem \ref{THM:2}, 
it suffices to assume that $f(0) = f''(0) = 0$
and that there is a control up to the fourth derivative of $f$.
See also Remark \ref{REM:poly}.} 
and $a (\eps, t)$ and $\delta(\eps)$ are parameters we will fix below.
In the following, we take  the noise $\eta^\eps$ to be white in time but
smooth and stationary in space.\footnote{
Think of $\eta^\eps = \psi *_x \xi$ for some smooth function $\psi$ on $(\eps^{-1}\T)^2$.
Then, 
with $\wt \beta_n$ as in \eqref{G0a}, 
$\eta^\eps$ can be formally written  as 
\begin{align}
\eta^\eps(x, t) & 
=  \eps \sum_{n\in (\eps \Z)^{2}}
\ft \psi(n) d \wt \be_n (t) e^{2\pi i  n\cdot x} .
\label{MG0a}
\end{align}

}
 We point out that we could also work  with a Gaussian noise
$\eta^\eps$ which is regular both in space and time but, in order to fit more easily
in the general framework of this paper, we prefer to stick to a noise which is
white in time. 
Similarly,  we could work with a function $f$ with polynomial
growth.
For simplicity of the presentation, however, 
we work under the boudedness assumption on $f$.
 Indeed,  we will see that the precise form of $f$ does not matter in
the limit. 
See also Remark \ref{REM:poly} below.

Our aim is to describe the long time and large space behavior of
the solution $w_\eps$ to \eqref{SNLW11}.
 In order to do so,  we perform a change of variables
$u_{\eps} (x, t) \overset{\text{def}}{=} \eps^{- \g} w_\eps (\eps^{-1} x, \eps^{-1} t)$ 
and observe that $u_{\eps}$ satisfies
\begin{align}
\begin{cases}
 \dt^2 u_\eps - \Dl u_\eps
  = \eps^{- \gamma - 2} \big\{f
(\eps^{\gamma} u_{\eps}) + \eps^{\gamma} a
(\eps, \eps^{-1} t) u_{\eps}\big\} + \eps^{ - \g-\frac 12 } \delta
(\eps) \eta_{\eps}\\
(u_\eps, \dt u_\eps) |_{t = 0} = (0, 0),
\end{cases}
\label{SNLW12}
\end{align}

\noi
where $\eta_{\eps} (x, t) = \eps^{- \frac 3  2} \eta^\eps (\eps^{-1} x, \eps^{- 1} t)$. 
The normalization for $\eta_{\eps}$ has
been chosen in such a way that it converges as $\eps \rightarrow 0$
to a space-time white noise $\xi$ in law.
With this normalization, we  choose
$\delta (\eps) = \eps^{\g + \frac 1 2}$ in order for the
coefficient in front of $\eta_{\eps}$ to be $O_{\eps} (1)$ as
$\eps \rightarrow 0$.
For the sake of a simpler statement below, 
we apply Skorokhod's theorem\footnote{If we do not apply
Skorokhod's theorem here, then the conclusion of Theorem \ref{THM:2} 
holds only along some sequence $\{\eps_j\}_{j \in \N}$ tending to 0.}
and introduce a new noise with the same law, still denoted by $\eta_\eps$,
such that it converges to the whose noise $\xi$ 
almost surely.
We also use $u_\eps$ to denote the solution to \eqref{SNLW12}.
Then, letting $\Psi_\eps$ denote the  stochastic convolution $\Psi_\eps$ 
given by  $\Psi_{\eps} = (\dt^2 - \Dl)^{-1} \eta_\eps$, 
it follows from an argument analogous to the proof of Proposition \ref{PROP:sconv}
that $\Psi_\eps$ converges almost surely to the stochastic convolution $\Psi$ defined in \eqref{sconv1}
 in $C(\R_+; W^{\s, \infty}(\T^2))$ for any $\s <0$,
where we endow the space with the compact-open topology in time.

We now state the  main result of this section.

\begin{theorem}\label{THM:2}
  Let $\delta (\eps) = \eps^{\g + \frac 12}$ and $\gamma = 1$.
 Then,  there exists a choice of $a (\eps, t)$ such that, as $\eps \to 0$, 
 the family of 
the solutions   $\{u_{\eps}\}_{\eps>0}$ to \eqref{SNLW12} converges 
almost surely 
 to  the solution $u$ to the following  Wick ordered cubic SNLW:
  \begin{equation}
\dt^2 u - \Dl  u = \lambda : \!u^3 \!: \, + \, \xi
    \label{SNLW_ld}
  \end{equation}
  with zero initial data,
where the convergence takes place  
in $C([0, T_\o]; H^\s(\T^2))$, $\s < 0$,  for some  $T = T_\o(\Psi)>0$.
   Here the constant $\lambda = \lambda (f)$
  depends only on the function $f$.

\end{theorem}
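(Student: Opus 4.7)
\medskip
\noindent\textbf{Proof proposal for Theorem \ref{THM:2}.}
The plan is to reduce \eqref{SNLW12} to a perturbation of the Wick ordered cubic SNLW and then apply the local well-posedness machinery from Section~\ref{SEC:3} together with the stochastic estimates from Section~\ref{SEC:2} adapted to the smoothed noise $\eta_\eps$.
First, since $f$ is odd and smooth with bounded derivatives, I Taylor expand
\[
\eps^{-3} f(\eps u) = f'(0)\,\eps^{-2} u + \tfrac{f'''(0)}{6}\, u^3 + \sum_{k\ge 2} \tfrac{f^{(2k+1)}(0)}{(2k+1)!}\, \eps^{2k-2}\, u^{2k+1},
\]
so that setting $\gamma=1$ in \eqref{SNLW12} turns the equation into
\[
\dt^2 u_\eps - \Dl u_\eps = \tfrac{f'''(0)}{6}\, u_\eps^3 + \bigl(\eps^{-2} f'(0) + \eps^{-2} a(\eps,\eps^{-1}t)\bigr)\, u_\eps + R_\eps(u_\eps) + \eta_\eps,
\]
where $R_\eps(u) = \sum_{k\ge 2} c_k\, \eps^{2k-2}\, u^{2k+1}$ collects the higher-order Taylor terms.

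Next, I would introduce the Wick renormalization with respect to $\sigma_\eps(t) = \E[\Psi_\eps^2(x,t)]$, where $\Psi_\eps = (\dt^2-\Dl)^{-1}\eta_\eps$. Using \eqref{H2}, we have $u_\eps^3 = H_3(u_\eps;\sigma_\eps) + 3\sigma_\eps(t)\, u_\eps$, and an analogous expansion of $u_\eps^{2k+1}$ as a combination of Hermite polynomials with coefficients polynomial in $\sigma_\eps(t)$. Defining the renormalization constant through
\[
a(\eps, \eps^{-1} t) := -f'(0) - \tfrac{f'''(0)}{2}\,\eps^{2}\,\sigma_\eps(t) - \sum_{k \ge 2} d_{k,\eps}(t),
\]
with $d_{k,\eps}(t)$ the coefficient of the linear Hermite term arising from $\eps^{2k}\,u_\eps^{2k+1}$, the equation becomes
\[
\dt^2 u_\eps - \Dl u_\eps = \lambda\, H_3(u_\eps;\sigma_\eps) + \wt R_\eps(u_\eps) + \eta_\eps, \qquad \lambda := \tfrac{f'''(0)}{6},
\]
where $\wt R_\eps$ contains only Wick monomials $H_m(u_\eps;\sigma_\eps)$ with $m\ge 3$, each multiplied by $\eps^{2k-2}\sigma_\eps^j$ for some $k\ge 2$ and $j\ge 0$ with $m+2j = 2k+1$. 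Since $\sigma_\eps(t) \sim t\log\eps^{-1}$ by (an analogue of) \eqref{sig1}, every such coefficient is $O(\eps^{2k-2}(\log\eps^{-1})^j)$ with $k\ge 2$, hence $o(1)$ as $\eps\to 0$.

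I would then implement the Da Prato--Debussche ansatz $u_\eps = \Psi_\eps + v_\eps$, so that $v_\eps$ solves the fixed point problem
\[
v_\eps(t) = -\int_0^t \tfrac{\sin((t-\tau)|\nb|)}{|\nb|}\Bigl[\lambda F_{\Psi_\eps}(v_\eps) + \wt R_\eps(\Psi_\eps+v_\eps)\Bigr] d\tau,
\]
with $F_{\Psi_\eps}$ defined as in \eqref{eq:wick} using the Wick powers $:\!\Psi_\eps^\ell\!:$. The convergence $:\!\Psi_\eps^\ell\!:\,\to\,:\!\Psi^\ell\!:$ in $C([0,T];W^{-\eps,\infty}(\T^2))$ follows by repeating the computation of Proposition~\ref{PROP:sconv} for the noise $\eta_\eps$ (which converges almost surely to $\xi$ in law after the Skorokhod embedding); this uses that the spectral density of $\eta_\eps$ approximates that of $\xi$ on compact frequency sets. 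Armed with this, Proposition~\ref{PROP:nonlin} yields uniform-in-$\eps$ estimates on $\|v_\eps\|_{X^\s(T)}$ for a stopping time $T = T_\omega(\Psi) > 0$ and a difference estimate for $v_\eps - v$, where $v$ is the limiting solution of~\eqref{SNLW_ld} minus $\Psi$.

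The main technical obstacle, in my view, is controlling the remainder $\wt R_\eps(\Psi_\eps+v_\eps)$ in an appropriate $L^{\wt q}_T L^{\wt r}_x$ or $L^1_T H^{s-1}_x$ norm uniformly in $\eps$. For each Wick monomial $H_m(u_\eps;\sigma_\eps)$ with $m\ge 3$, I would re-expand it via \eqref{Herm3} into powers $v_\eps^{m-\ell}$ times $:\!\Psi_\eps^\ell\!:$, and then invoke the multilinear estimate \eqref{multilinear}; each such contribution carries a prefactor of the form $\eps^{2k-2}(\log\eps^{-1})^j$ with $k\ge 2$, which provides the necessary smallness. One must also verify that the nonlinearity $\wt R_\eps$ does not destroy the contraction: since $f$ has polynomial-order vanishing prefactors, a standard induction on the degree of the Hermite polynomial, together with the boundedness assumption on the derivatives of $f$, confines the exponents to a finite range and hence the total number of terms is bounded uniformly in $\eps$. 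Combining this with the almost sure convergence $\Psi_\eps \to \Psi$ and $:\!\Psi_\eps^\ell\!:\,\to\,:\!\Psi^\ell\!:$, a pathwise stability argument on the fixed point equation delivers $u_\eps \to \Psi + v = u$ in $C([0,T_\omega];H^\s(\T^2))$, which is the desired conclusion.
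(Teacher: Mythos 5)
Your overall strategy (Da Prato--Debussche decomposition, Wick renormalization, stability of the fixed point) matches the paper's, but there is a genuine gap in how you handle the nonlinearity $f$. You write $\eps^{-3}f(\eps u) = f'(0)\eps^{-2}u + \tfrac{f'''(0)}{6}u^3 + \sum_{k\ge 2}\tfrac{f^{(2k+1)}(0)}{(2k+1)!}\eps^{2k-2}u^{2k+1}$, i.e.\ you replace $f$ by its (infinite) Taylor series. This requires $f$ to be real analytic with a convergent power series representation, which is strictly stronger than the paper's hypothesis that $f$ is smooth with a bounded fourth derivative. Moreover, you then attempt to Wick-reorganize each of the infinitely many terms $u^{2k+1}$ and to absorb all of the resulting linear Hermite contributions into $a(\eps,\eps^{-1}t) = -f'(0) - \tfrac{f'''(0)}{2}\eps^2\sigma_\eps - \sum_{k\ge 2}d_{k,\eps}(t)$. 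You never establish convergence of this infinite series, and in any case those $d_{k,\eps}$ terms are already $o(1)$ and do not need to be subtracted; including them over-complicates $a$ without purpose. The subsequent claim that one may invoke the multilinear estimate \eqref{multilinear} on infinitely many Wick monomials of unbounded degree, with only a vague appeal to ``a standard induction on the degree confines the exponents to a finite range,'' is not justified: with the $L^qL^r$ Strichartz framework of Section~\ref{SEC:3} one cannot control $v_\eps^{m}$ uniformly in $m$, so one must isolate a remainder of \emph{bounded} degree before applying the fixed point argument.

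The paper avoids all of this by using Taylor's theorem with the \emph{integral remainder}: it writes
\[
F_\eps(v_\eps) = \eps^{-2}\{f'(0)+a\}(\Psi_\eps+v_\eps) + \eps^{2\gamma-2}\tfrac{f^{(3)}(0)}{6}(\Psi_\eps+v_\eps)^3 + R_\eps,
\]
where $R_\eps = \eps^{2\gamma-2}\Lambda_\eps\cdot(\Psi_\eps+v_\eps)^3$ and $\Lambda_\eps = \int_0^1\tfrac{(1-\tau)^2}{2}\{f^{(3)}(\tau\eps^\gamma(\Psi_\eps+v_\eps))-f^{(3)}(0)\}\,d\tau$. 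Another application of the fundamental theorem of calculus plus boundedness of $f^{(4)}$ gives the pointwise bound $|\Lambda_\eps|\les\eps^\gamma\{|\Psi_\eps|+|v_\eps|\}$, which combined with $\eps^\gamma\|\Psi_\eps\|_{L^\infty_{t,x}}=o_\eps(1)$ yields $|R_\eps|\le o_\eps(1)(1+|v_\eps|)^4$. This reduces the remainder to a single fourth-order error term, for which a variant of Proposition~\ref{PROP:nonlin} with $k=4$ applies directly. Only the cubic Taylor term needs Wick-reorganization, so $a$ is taken to be exactly $-f'(0)-\eps^2\sigma_\eps(t)\tfrac{f^{(3)}(0)}{2}$, and no infinite sums appear anywhere. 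You should replace your infinite series expansion with this finite-order Taylor expansion with remainder; then the rest of your argument goes through.
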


\begin{proof}
In order to motivate the choice of $\gamma$, $a$, and the constant $\lambda$,
let us decompose $u_{\eps} = \Psi_{\eps} +v_{\eps}$ 
as in \eqref{decomp}.
Then, with our choice of $\dl(\eps) =  \eps^{\g+ \frac{1}{2}}$, 
we see that $v_{\eps}$ satisfies 
\begin{align}
 \dt^2 v_\eps - \Dl v_\eps = F_\eps(v_\eps), 
\label{SNLW13}
 \end{align}

\noi
where $F_\eps(v_\eps)$ is given by 
\[ F_{\eps} (v_{\eps}) \overset{\text{def}}{=} \eps^{-
   \gamma - 2} \big\{f (\eps^{\gamma} (\Psi_{\eps} +
   v_{\eps})) + \eps^{\gamma} a (\eps,\eps^{-1} t) 
   (\Psi_{\eps} + v_{\eps})\big\} . \]

Since $f$ is chosen to be odd, we have $f (0) = f'' (0) = 0$.
Then, Taylor's remainder theorem gives
\begin{align}
 F_{\eps} (v_{\eps}) = \eps^{- 2} \big\{f' (0) + a  (\eps, \eps^{-1} t)\big\} (\Psi_{\eps} + v_{\eps}) 
+ \eps^{2   \g - 2}  \frac{f^{(3)} (0)}{6}  (\Psi_{\eps} +
   v_{\eps})^3 + R_{\eps} 
\label{MG1}
\end{align}

\noi
with
\begin{align}
 R_{\eps} = \eps^{2 \gamma - 2} \int_0^1 \frac{(1 -
   \tau)^2}{2}  \big\{f^{(3)} (\tau \eps^{\gamma} (\Psi_{\eps} +
   v_{\eps})) - f^{(3)} (0)\big\}d \tau \cdot (\Psi_{\eps} +
   v_{\eps})^3 . 
\label{MG2}
\end{align}

\noi
From the explicit expression \eqref{H1a} for the Hermite polynomials, 
we have
\begin{align*}
  (\Psi_{\eps} + v_{\eps})^3 
  & = H_3 (\Psi_{\eps} +  v_{\eps}; \sigma_{\eps}) + 3 \sigma_{\eps}
   (\Psi_{\eps} + v_{\eps}), 
\end{align*}

\noi
where 
 $\s_{\eps} = \s_{\eps} (t) = \E
[\Psi_{\eps} (x, t)^2] \sim | \log \eps |$.\footnote{\label{foot12}
For simplicity, let  $\ft \psi(n) = \ind_{|n| \leq 1}$ in \eqref{MG0a}.
Then,  we have
\begin{align}
\eta_{\eps} (x, t) = \eps^{- \frac 3  2} \eta^\eps (\eps^{-1} x, \eps^{- 1} t)
=   \sum_{n\in  \Z^{2}}
\ind_{|n|\leq \eps^{-1}} \big(\eps^{-\frac12}d \wt \be_n (\eps^{-1}t)\big) e^{2\pi i  n\cdot x} 
\stackrel{d}{=}   \sum_{n\in  \Z^{2}}
\ind_{|n|\leq \eps^{-1}} d \wt \be_n (t) e^{2\pi i  n\cdot x}, 
\label{MG0}
\end{align}

\noi
where we use the white noise scaling in the last equality.
In view of \eqref{sig1} with \eqref{MG0}, it is easy to see 
the logarithmic divergence of $\s_\eps$ in this case.}
Hence, from \eqref{MG1} and \eqref{MG2}, we deduce that 
\begin{align*}
 F_{\eps} (v_{\eps}) = \eps^{- 2} \bigg\{ f' (0) + a (\eps, \eps^{-1} t) 
 & + 3 \eps^{2 \gamma} \sigma_{\eps}
\frac{f^{(3)} (0)}{6} \bigg\} (\Psi_{\eps} + v_{\eps}) \notag\\ 
& +\eps^{2 \gamma - 2}  \frac{f^{(3)} (0)}{6} H_3 (\Psi_{\eps} +v_{\eps}; \sigma_{\eps}) + R_{\eps}. 
\end{align*}

\noi
Therefore,  in order for $F_{\eps} (v_{\eps})$ to have a (non-trivial) 
finite limit (as a space-time distribution),  we must take
\[ \g = 1
   \qquad \text{and}\qquad 
   a (\eps, \eps^{-1} t) = - f' (0) - \eps^{2}
   \sigma_{\eps} (t) \frac{f^{(3)} (0)}{2}. \]

\noi
With these choices and letting 
$\ld =\frac{f^{(3)} (0)}{6} $,  we have
\begin{align*}
 F_{\eps} (v_{\eps}) 
= \lambda H_3 (\Psi_{\eps} +   v_{\eps}; \sigma_{\eps}) + R_\eps
 = \ld  :\! u_\eps^3\!: + R_\eps. 
%\label{MG3a}
\end{align*}

It remains to show that $R_\eps \to 0$ as $\eps \to 0$.
Let us analyze the behavior of $R_{\eps}$. Letting
\[ \Ld_\eps = \int_0^1 \frac{(1 - \tau)^2}{2}  \big\{f^{(3)} (\tau
   \eps^\g (\Psi_{\eps} + v_{\eps})) - f^{(3)} (0)\big\}
d \tau, \]

\noi
we have
\begin{align}
	 R_{\eps} = \Lambda_{\eps}  (\Psi_{\eps}^3 + 3
   \Psi_{\eps}^2 v_{\eps} + 3 \Psi_{\eps}
   v_{\eps}^2 + v_{\eps}^3). 
\label{MG4}
\end{align}

\noi
Moreover, by the fundamental theorem of calculus, we have
\[ \Ld_\eps = \eps^\g \int_0^1 \frac{(1 -   \tau)^2}{2} 
\int_0^\tau f^{(4)} (\al  \eps^{\gamma} (\Psi_{\eps} +   v_{\eps}))d\al d \tau 
\cdot (\Psi_{\eps} + v_{\eps}) \]

\noi
Thus, using the boundedness of the derivatives of $f$,  we have
\begin{align} 
| \Ld_\eps (x, t) | \les \eps^{\gamma} 
\big\{|    \Psi_{\eps} (x, t) | + | v_{\eps} (x, t) |\big\} . 
\label{MG5}
\end{align}

By Proposition \ref{PROP:sconv} and \eqref{MG0} in the footnote \ref{foot12}, 
 it is not difficult to see that
\begin{align}
 \eps^{\gamma} \| \Psi_{\eps} \|_{L^{\infty}_t([0, 1]; L^\infty_x)} =
 o_{\eps} (1) 
\label{MG6}
\end{align}

\noi
almost surely.
Hence, from \eqref{MG4}, \eqref{MG5}, and \eqref{MG6}, 
we conclude that 
\begin{align*}
 | R_{\eps} (x, t ) | \leq o_{\eps} (1) \big(1 + |   v_{\eps} (x, t) |\big)^4 . 
%\label{MG7}
\end{align*}

\noi
In particular, we can write \eqref{SNLW13} as 
\begin{align*}
 \dt^2 v_\eps - \Dl v_\eps 
 & = \ld  :\! u_\eps^3\!: 
+ o_\eps\big(\jb{ v_{\eps}}^4\big) \notag\\
& = 
\ld   \sum_{\l=0}^3 {3\choose \l} :\! \Psi_\eps^\l \,  \!: v_\eps^{3-\ell}
  + o_\eps\big(\jb{ v_{\eps}}^4\big).
%\label{SNLW14}
 \end{align*}

\noi
Then, by proceeding  as in Section \ref{SEC:3}
with a variant of Proposition \ref{PROP:nonlin} 
(with $k = 4$ in view of the fourth order error term), 
 we obtain an a priori bound on $v_\eps$, uniformly in $\eps > 0$.
Moreover, the local existence time $T = T_\o $ 
depends only on $\Psi$
and is independent of $\eps>0$.

Let $u$ be the solution to \eqref{SNLW_ld}.
In an analogous manner, we can estimate the difference $v -v_\eps$, 
where $v = u - \Psi$ as in \eqref{decomp}.
Together with the almost sure convergence of $\Psi_\eps$ to $\Psi$, 
we see that $u_\eps$ converges to $u$ in $C([0, T_\o]; H^\s(\T^2))$
for $\s < 0$.
\end{proof}

\begin{remark}\label{REM:poly}\rm
If $f$ is an odd polynomial of degree $M$, then 
we obtain the following bound on $\Ld_\eps$:
\begin{align*} 
| \Ld_\eps (x, t) | 
& \les 
\max\Big(\eps^{\g} \big\{|    \Psi_{\eps} (x, t) | + | v_{\eps} (x, t) |\big\} , 
(\eps^{\g} \big\{|    \Psi_{\eps} (x, t) | + | v_{\eps} (x, t) |\big\})^{(M-3)_+}\Big).
\end{align*}

\noi
Together with \eqref{MG4} and \eqref{MG6}, we obtain 
\begin{align*} | R_{\eps} (x, t ) | \leq o_{\eps} (1) 
\big(1 + |  v_{\eps} (x, t) |\big)^{\max(4, M)}. 
\end{align*}

 \noi
 Then, by applying 
 a variant of Proposition \ref{PROP:nonlin} 
(with $k = \max(4, M)$ in view of the  error term), 
 we obtain a uniform (in $\eps$) a priori bound on $v_\eps$
 and the convergence of $u_\eps$ to the solution $u$
 to \eqref{SNLW_ld} as above.

\end{remark}

\begin{remark}\rm
We can also consider  the following  SNLW 
on $(\eps^{- 1} \T)^2$: 
\[ \dt^2 w_\eps- \Dl w_\eps  = f (w_\eps) + a (\eps, t) w_\eps 
+ b   (\eps, t) w_\eps^3 + \delta (\eps) \eta^\eps \]
  with two parameters $a, b$ which can be ``tuned'' 
  so that,  via a similar procedure, 
  we can cancel  the cubic term in the asymptotics of the nonlinear term 
  and obtain the  quintic SNLW: 
    \begin{equation}
\dt^2 u - \Dl  u = \lambda : \!u^5 \!: \, + \, \xi
    \label{SNLW_ld2}
  \end{equation}

\noi
for some $\ld = \ld(f)$.
  In this case, by choosing $\g = \frac 12$, 
    the remainder  takes the  form
  \[ \wt{R}_{\eps} = \wt{\Lambda}_{\eps} 
     (\Psi_{\eps}^5 + 5 \Psi_{\eps}^4 v_{\eps} + 10
     \Psi_{\eps}^3 v_{\eps}^2 + 10 \Psi_{\eps}^2
     v_{\eps}^3 + 5 \Psi_{\eps} v_{\eps}^4 +
     v_{\eps}^5) \]
  with
  \[ \wt{\Lambda}_{\eps} = \int_0^1 \frac{(1 - \tau)^4}{4!} 
     \big\{f^{(5)} (\tau \eps^{\gamma} (\Psi_{\eps} +
     v_{\eps})) - f^{(5)} (0)\big\} d \tau \]

\noi
  which yields the analogous estimate
  \[| \wt \Ld_\eps (x, t) | \les \eps^{\gamma} 
\big\{|    \Psi_{\eps} (x, t) | + | v_{\eps} (x, t) |\big\} .   \]
 
 \noi
 This implies
\begin{align*} | \wt R_{\eps} (x, t ) | \leq o_{\eps} (1) \big(1 + |   v_{\eps} (x, t) |\big)^6 . 
\end{align*}

 \noi
 Then, by applying 
 a variant of Proposition \ref{PROP:nonlin} 
(with $k = 6$ in view of the sixth order error term), 
 we obtain a uniform (in $\eps$) a priori bound on $v_\eps$
 and the convergence of $u_\eps$ to the solution $u$
 to \eqref{SNLW_ld2}.
 One can similarly consider SNLW with more parameters to be tuned
 to obtained the septic Wick ordered SNLW, etc.

\end{remark}

\begin{ackno}\rm
T.O.~was supported by the ERC starting grant 
no.~637995 ``ProbDynDispEq''.
M.G.~and H.K.~were supported by the DFG through CRC 1060.

\end{ackno}

\end{document}